\newcommand{\CC}{\ensuremath{\mathbb{C}}}
\newcommand{\RR}{\ensuremath{\mathbb{R}}}
\newcommand{\ZZ}{\ensuremath{\mathbb{Z}}}
\newcommand{\DD}{\ensuremath{\mathbb{D}}}
\newcommand{\md}{\mathrm{d}}
\newcommand{\me}{\mathrm{e}}
\newcommand{\ve}{\varepsilon}
\newcommand{\mo}{\mathcal{O}}
\newcommand{\mi}{\mathrm{i}}
\newcommand{\sm}{\setminus}
\newtheorem{theorem}{Theorem}[section]
\newtheorem{lemma}[theorem]{Lemma}
\numberwithin{equation}{section}
\titleformat{\section}{\bfseries}{\thesection .}{0.5em}{}
\titleformat{\subsection}{\it}{\thesubsection .}{0.5em}{}
\titleformat{\subsubsection}{\it}{\thesubsubsection .}{0.5em}{}
\titlespacing{\section}{0pt}{3ex plus 1ex minus .2ex}{3ex plus .2ex}
\titlespacing{\subsection}{0pt}{3ex plus 1ex minus .2ex}{3ex plus .2ex}
\titlespacing{\subsubsection}{0pt}{3ex plus 1ex minus .2ex}{3ex plus .2ex}
\definecolor{lime}{HTML}{A6CE39}
\DeclareRobustCommand{\orcidicon}{%
	\begin{tikzpicture}
		\draw[lime, fill=lime] (0,0) 
		circle [radius=0.14] 
		node[white] {{\fontfamily{qag}\selectfont \tiny ID}};	\draw[white, fill=white] (-0.0625,0.095) 
		circle [radius=0.007];
	\end{tikzpicture}
	\hspace{-2mm}}
\xdef\csname orcid\x\endcsname{\noexpand\href{https://orcid.org/\csname orcidauthor\x\endcsname}{\noexpand\orcidicon}}
\begin{document}

\title{\bf\Large Complete asymptotic expansions of the Humbert function $\Psi_1$\\ for two large arguments}

\author[1]{Peng-Cheng Hang\orcidA{}\,}
\author[1]{Liangjian Hu\thanks{Corresponding author.\\
		\textit{E-mail addresses:} 
		\href{mailto:mathroc618@outlook.com}{mathroc618@outlook.com} (P.-C. Hang),
		\href{mailto:Ljhu@dhu.edu.cn}{Ljhu@dhu.edu.cn} (L. Hu),
		\href{mailto:mathwinnie@dhu.edu.cn}{mathwinnie@dhu.edu.cn} (M.-J. Luo).
		                      }
	      }
\author[1]{Min-Jie Luo\orcidB{}\,}

\affil[1]{{\normalsize School of Mathematics and Statistics, Donghua University,
		
		Shanghai 201620, People's Republic of China}
	     }

\date{}

\maketitle

\begin{abstract}
	In our recent work [SIGMA \textbf{20}, 074 (2024)], the leading behaviour of the Humbert function $\Psi_1[a,b;c,c';x,y]$ when $x\to\infty$ and $y\to +\infty$ has been derived in a direct and simple manner. In this paper, we obtain the complete asymptotics of $\Psi_1$ in the general case $x,y\to\infty$ along a new path. Indeed, our proof is based on a sharp estimate on ${}_2F_2[a,b-n;c,d-n;z]$, which is valid uniformly for $n\in\ZZ_{\geqslant 0}$ and large $z$.
	\vspace{4mm}
	
	\noindent
	{\bf Mathematics Subject Classification:} 
	33C70, 
	41A60, 
	30E15. 
	\vspace{2mm}
	
	\noindent
	{\bf Keywords:} Humbert functions, asymptotic expansions.
\end{abstract}

\section{Introduction}
The confluent hypergeometric function of two variables $\Psi_1$, introduced by Humbert \cite[p. 75]{Humbert 1922}, is defined by
\[\Psi_1[a,b;c,c';x,y]:=\sum_{m,n=0}^{\infty}\frac{(a)_{m+n}(b)_m}{(c)_m(c')_n}\frac{x^m}{m!}\frac{y^n}{n!},\qquad |x|<1,|y|<\infty,\]
where $a,b\in\CC$ and $c,c'\notin\ZZ_{\leqslant 0}$. It is known that $\Psi_1$ has an extension to the region (see \cite{Hang_Luo 2024, Hang_Luo 2025})
\begin{equation}\label{Psi_1 domain}
	\DD_{\Psi_1}:=\bigl\{(x,y)\in\CC^2\colon x\ne 1,\, \left|\arg(1-x)\right|<\pi,\, |y|<\infty\bigr\}.
\end{equation}

By using inverse Laplace transformations of the Humbert functions $\Phi_2$, $\Phi_3$ and $\Xi_2$, Wald and Henkel \cite{Wald_Henkel 2018} derived the leading-order behaviour of these functions when two variables become simultaneously large. But they failed to interpret the integral for $\Psi_1$ \cite[Equation (2.4b)]{Wald_Henkel 2018}
\begin{equation}\label{Psi_1 Laplace integral}
	\Psi_1[a,b;c,c';x,y]=\frac{1}{\Gamma(a)}\int_0^{\infty}\me^{-u}u^{a-1}\,_1F_1\left[\begin{matrix}
		b \\
		c
	\end{matrix};xu\right]{}_0F_1\left[\begin{matrix}
		-\\
		c'
	\end{matrix};yu\right]\md u
\end{equation}
as a convolution and so could not establish the asymptotics of $\Psi_1$.

Recently, we \cite{Hang_Luo 2024} partially answered Wald and Henkel's problem by deriving the leading behaviour of $\Psi_1$ under the condition
\begin{equation}\label{Psi_1 y>0 condition}
	x\to\infty,y\to +\infty,\quad \left|\arg(1-x)\right|<\pi,\quad 0<\gamma_1\leqslant\frac{y}{\left|1-x\right|}\leqslant \gamma_2<\infty.
\end{equation}
Our starting point is the following expression for $\Psi_1$.

\begin{theorem}[{\cite[Theorem 3.1]{Hang_Luo 2024}}]\label{Theorem: Psi_1 series representation}
	Assume that $a,b\in\CC,\,c,c'\in\CC\sm\ZZ_{\leqslant 0}$ and $a-b,a-c\in\CC\sm\ZZ$. Then
	\begin{equation}\label{Psi_1 series--|1-x|>1}
		\Psi_1[a,b;c,c';x,y]=\frac{\Gamma(c)\Gamma(b-a)}{\Gamma(b)\Gamma(c-a)}\left(1-x\right)^{-a}V_1(x,y)+\frac{\Gamma(c)\Gamma(a-b)}{\Gamma(a)\Gamma(c-b)}\left(1-x\right)^{-b}V_2(x,y)
	\end{equation}
	holds for $|{\arg}(1-x)|<\pi$, $|x-1|>1$ and $|y|<\infty$, where
	\begin{align*}
		&V_1(x,y):=\sum_{n=0}^{\infty}\frac{\left(a\right)_n\left(c-b\right)_n}{\left(a-b+1\right)_n}
		\,_2F_2\left[\begin{matrix}
			a-c+1,a+n \\
			c',a-b+1+n
		\end{matrix};\frac{y}{1-x}\right]\frac{\left(1-x\right)^{-n}}{n!},\\
		&V_2(x,y):=\sum_{n=0}^{\infty}\frac{\left(b\right)_n\left(c-a\right)_n}{\left(b-a+1\right)_n}
		\,_2F_2\left[\begin{matrix}
			a-c+1,a-b-n\\
			c',a-c+1-n
		\end{matrix};y\right]\frac{\left(1-x\right)^{-n}}{n!}.
	\end{align*}
\end{theorem}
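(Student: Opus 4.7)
My plan is to start from the double series defining $\Psi_1$, which converges absolutely for $|x|<1$ and $|y|<\infty$, sum the $m$-index first to obtain the single series
\[\Psi_1[a,b;c,c';x,y]=\sum_{n=0}^{\infty}\frac{(a)_n}{(c')_n}\frac{y^n}{n!}\,{}_2F_1\!\left[\begin{matrix}a+n,b\\ c\end{matrix};x\right],\]
and then analytically continue each ${}_2F_1$ using the classical connection formula between $z=0$ and $z=\infty$, expressed in the variable $1/(1-x)$:
\begin{align*}
{}_2F_1\!\left[\begin{matrix}a+n,b\\ c\end{matrix};x\right]
&=\frac{\Gamma(c)\Gamma(b-a-n)}{\Gamma(b)\Gamma(c-a-n)}(1-x)^{-a-n}{}_2F_1\!\left[\begin{matrix}a+n,c-b\\ a-b+1+n\end{matrix};\tfrac{1}{1-x}\right]\\
&\quad+\frac{\Gamma(c)\Gamma(a-b+n)}{\Gamma(a+n)\Gamma(c-b)}(1-x)^{-b}{}_2F_1\!\left[\begin{matrix}b,c-a-n\\ b-a-n+1\end{matrix};\tfrac{1}{1-x}\right].
\end{align*}
The identity $\Gamma(z-n)=(-1)^n\Gamma(z)/(1-z)_n$ then turns the $n$-dependent Gamma prefactors into the Pochhammer ratios $\Gamma(c)\Gamma(b-a)(a-c+1)_n/[\Gamma(b)\Gamma(c-a)(a-b+1)_n]$ and $\Gamma(c)\Gamma(a-b)(a-b)_n/[\Gamma(a)\Gamma(c-b)(a)_n]$, isolating the expected global constants.

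For the first contribution, expanding the inner ${}_2F_1$ in powers of $1/(1-x)$ and using $(a)_n(a+n)_k=(a)_{n+k}$ together with $(a-b+1)_n(a-b+1+n)_k=(a-b+1)_{n+k}$ gives the double series
\[\sum_{n,k\geqslant 0}\frac{(a)_{n+k}(a-c+1)_n(c-b)_k}{(c')_n(a-b+1)_{n+k}}\frac{y^n(1-x)^{-n-k}}{n!\,k!},\]
which matches the expansion of $V_1(x,y)$ after renaming $(n,k)\leftrightarrow(j,m)$. For the second contribution the same manipulation produces
\[\sum_{n,k\geqslant 0}\frac{(a-b)_n(b)_k(c-a-n)_k}{(c')_n(b-a-n+1)_k}\frac{y^n(1-x)^{-k}}{n!\,k!},\]
while the expansion of $V_2(x,y)$, reindexed to carry $y^n(1-x)^{-k}$ in each term, reads
\[\sum_{n,k\geqslant 0}\frac{(b)_k(c-a)_k(a-c+1)_n(a-b-k)_n}{(b-a+1)_k(c')_n(a-c+1-k)_n}\frac{y^n(1-x)^{-k}}{n!\,k!}.\]
Term-by-term equality then reduces to the Pochhammer identity
\[\frac{(a-b)_n(c-a-n)_k}{(b-a-n+1)_k}=\frac{(c-a)_k(a-c+1)_n(a-b-k)_n}{(b-a+1)_k(a-c+1-k)_n},\]
which I would prove by writing every symbol as a Gamma quotient and then applying the reflection formula $\Gamma(z)\Gamma(1-z)=\pi/\sin(\pi z)$ twice, noting that the $(-1)^{n+k}$ phases from the two applications cancel.

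The hypotheses $a-b,a-c\notin\ZZ$ prevent all Gamma singularities in the connection formula and in the denominator factor $(a-c+1-n)_k$ appearing in $V_2$. The main obstacle I foresee is precisely this final Pochhammer identity: it is not among the standard contiguous relations and involves shifts in both the numerator and denominator parameters, so some care is required in the Gamma/sine bookkeeping. A secondary concern is justifying Fubini in the rearrangement of the double series arising from the second contribution; once absolute convergence (which follows from the $1/(n!\,k!)$ factors together with the geometric decay in $1/(1-x)$ when $|x-1|>1$) has been established, the term-by-term application of the Pochhammer identity completes the proof.
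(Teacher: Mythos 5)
Your derivation is correct and follows essentially the same route as the cited proof of \cite[Theorem 3.1]{Hang_Luo 2024}: sum over $m$ to obtain $\sum_{n\geqslant 0}\frac{(a)_n}{(c')_n}\frac{y^n}{n!}\,{}_2F_1[a+n,b;c;x]$, apply the $1/(1-x)$ connection formula, and rearrange the resulting double series, with your final Pochhammer identity being a correct consequence of two applications of $\Gamma(z-n)=(-1)^n\Gamma(z)/(1-z)_n$. The only step worth tightening is the Fubini justification for the second contribution, where the $n$-dependent ratio $(c-a-n)_k/(b-a+1-n)_k$ must be bounded polynomially in $k$ uniformly in $n$ — this holds because $a-b\notin\ZZ$ keeps the denominators away from zero (compare Lemma \ref{Lemma: uniform bound of Pochhammer ratio}), but it does not follow from the $1/(n!\,k!)$ factors alone.
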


In this paper, we shall give detailed asymptotic analyses of $\Psi_1$ when $x\to\infty$ and $y\to\infty$. Our first result provides the full asymptotic expansion of $\Psi_1$ under the condition \eqref{Psi_1 y>0 condition}, which covers our previous result \cite[Theorem 3.6]{Hang_Luo 2024}.

\begin{theorem}\label{Theorem: Psi_1 asymptotics for y positive}
	Assume that $a,b\in\CC,\,c,c'\in\CC\sm\ZZ_{\leqslant 0}$ and $a-b,a-c\in\CC\sm\ZZ$. Then, under the condition \eqref{Psi_1 y>0 condition},
	\[\Psi_1[a,b;c,c';x,y]\sim \frac{\Gamma(c)\Gamma(c')}{\Gamma(a)\Gamma(c-b)}y^{a-2b-c'}\me^y\sum_{k=0}^{\infty}a_k(x,y)y^{-k},\]
	where $a_0(x,y)=\bigl(\frac{y}{1-x}\bigr)^b$ and in general, for any  $k\in\ZZ_{\geqslant 0}$,
	\begin{equation}\label{coefficients a_k(x,y)}
		a_k(x,y)=\sum_{j=0}^{k}\frac{\left(b\right)_j\left(b-a+c'\right)_{k-j}\left(j+b-a+1\right)_{k-j}}{j!\left(k-j\right)!}\,_3F_2\left[\begin{matrix}
			-j,j-k,c+c'-a-1\\
			b-a+c',a-b-k
		\end{matrix};1\right]\left(\frac{y}{1-x}\right)^{b+j}.
	\end{equation}
\end{theorem}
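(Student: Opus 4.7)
The plan is to apply Theorem~\ref{Theorem: Psi_1 series representation} to split $\Psi_{1}$ into the pieces $(1-x)^{-a}V_{1}(x,y)$ and $(1-x)^{-b}V_{2}(x,y)$ and to handle them separately under \eqref{Psi_1 y>0 condition}. Because $y/(1-x)$ is bounded, the inner ${}_{2}F_{2}$ of $V_{1}$ has its argument confined to a compact set, and since the ratio $(a+n)_{m}/(a-b+1+n)_{m}\to 1$ as $n\to\infty$ for each fixed $m$, this ${}_{2}F_{2}$ stays uniformly bounded in $n$ (in fact it converges to a fixed ${}_{1}F_{1}$). It follows that $V_{1}$ is dominated by a convergent Gauss-type series in $(1-x)^{-1}$, so the summand $(1-x)^{-a}V_{1}$ is only algebraically bounded in $y$ and will be exponentially absorbed against the $\me^{y}$-growth produced by $V_{2}$.

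The core of the argument is therefore the analysis of $V_{2}$, whose ${}_{2}F_{2}$ factor carries the large argument $y$. The essential ingredient is the ``sharp estimate'' announced in the abstract, namely a complete uniform expansion
\[
{}_{2}F_{2}\!\left[\begin{matrix}\alpha,\beta-n\\ \gamma,\delta-n\end{matrix};z\right]=\frac{\Gamma(\gamma)\Gamma(\delta-n)}{\Gamma(\alpha)\Gamma(\beta-n)}\,\me^{z}z^{\alpha+\beta-\gamma-\delta}\sum_{k=0}^{K-1}d_{k}(n)z^{-k}+R_{K}(n,z)
\]
as $z\to\infty$ in the right half-plane, \emph{uniformly} for $n\in\ZZ_{\geqslant 0}$, with $d_{k}(n)$ polynomial in $n$ of degree at most $k$ and $R_{K}(n,z)$ dominated by a polynomial in $n$ times $\me^{\Re z}|z|^{\Re(\alpha+\beta-\gamma-\delta)-K}$. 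I would derive this by inserting Kummer's asymptotic for ${}_{1}F_{1}[\beta-n;\delta-n;zt]$ into an Euler-type integral representation of ${}_{2}F_{2}$ and then applying Watson's lemma at $t=1$; all of the $n$-dependence is funnelled through the Kummer coefficients $(\delta-\beta)_{k}(1-\beta+n)_{k}/k!$. The main technical obstacle is precisely to track this dependence so that the remainder grows only polynomially in $n$, since this is exactly what is needed to justify the later interchange of $\sum_{n}$ with the asymptotic summation in $k$.

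Once the lemma is in place, I insert it into the series defining $V_{2}$ and use the reflection identity
\[
\frac{\Gamma(a-c+1-n)}{\Gamma(a-b-n)}=\frac{\Gamma(a-c+1)}{\Gamma(a-b)}\cdot\frac{(b-a+1)_{n}}{(c-a)_{n}}
\]
to cancel the Pochhammer ratio $(c-a)_{n}/(b-a+1)_{n}$ already present in $V_{2}$. This produces
\[
V_{2}(x,y)\sim\frac{\Gamma(c')}{\Gamma(a-b)}\,\me^{y}y^{a-b-c'}\sum_{k=0}^{\infty}y^{-k}\sum_{n=0}^{\infty}\frac{(b)_{n}d_{k}(n)}{n!}(1-x)^{-n}.
\]
Multiplying by the $V_{2}$-prefactor from Theorem~\ref{Theorem: Psi_1 series representation} and rewriting $(1-x)^{-(n+b)}=(y/(1-x))^{n+b}y^{-(n+b)}$ collapses the double series into a single expansion in $y^{-1}$, whose coefficient at order $y^{a-2b-c'-m}$ is $\sum_{j=0}^{m}(b)_{j}\,d_{m-j}(j)(y/(1-x))^{b+j}/j!$. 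The closing step is to match this with \eqref{coefficients a_k(x,y)}: the Watson derivation produces $d_{k}(n)$ as a double Pochhammer sum, and a Saalsch\"utz-type rearrangement rewrites $d_{m-j}(j)$ as the terminating ${}_{3}F_{2}(1)$ displayed in \eqref{coefficients a_k(x,y)}. This identification is purely algebraic and, while routine once the closed form of $d_{k}(n)$ is at hand, constitutes the second main technical step of the proof.
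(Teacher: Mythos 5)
Your overall architecture coincides with the paper's: the same decomposition via Theorem~\ref{Theorem: Psi_1 series representation}, the observation that $V_1=\mo(1)$ because $y/(1-x)$ stays in a compact set, the reflection-formula cancellation of $(c-a)_n/(b-a+1)_n$ against $\Gamma(a-c+1-n)/\Gamma(a-b-n)$, and the final diagonal resummation producing $a_k(x,y)$. The gap is in the ``essential ingredient'' you invoke: a complete exponential expansion of ${}_2F_2[\alpha,\beta-n;\gamma,\delta-n;z]$ that is \emph{uniform in $n\in\ZZ_{\geqslant 0}$} with remainder controlled by a fixed polynomial in $n$. This is not a routine consequence of Watson's lemma. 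The $k$-th Kummer coefficient for ${}_1F_1[\beta-n;\delta-n;w]$ contains the factor $(1-\beta+n)_k\sim n^k$, so a $K$-term expansion has terms of size $n^K|w|^{-K}$ and is only an asymptotic statement in the regime $|w|\gg n$; for $n$ comparable to or larger than $|y|$ the truncated expansion carries no information, and establishing a remainder bound of the form (polynomial in $n$, degree independent of how $n$ compares to $|y|$) times $\me^{\Re z}|z|^{\sigma-K}$ is exactly the hard part. Indeed, the authors state in Section~\ref{Section: 4} that the validity of the full expansion \eqref{2F2 complete expansion} uniformly in $n$ (with remainders multiplied by $n^\lambda$) is a \emph{conjecture} they could not prove. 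Your proof therefore rests on an unproven lemma which is strictly harder than the theorem's own machinery.

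The paper circumvents this by never applying the exponential expansion to the full series. It splits $V_2=\sum_{n=0}^{N-1}+\sum_{n=N}^{\infty}$: on the finite block the Lin--Wong expansion \eqref{2F2 exponential expansion} is used for each \emph{fixed} $n$ (no uniformity needed), while the tail $n\geqslant N$ is controlled by the much cruder but genuinely uniform bound of Theorem~\ref{Theorem: 2F2(n;z) bound for z>0}, namely ${}_2F_2=\mo\bigl((n+1)^{2|b-d|}y^{p}\me^{y}\bigr)$, combined with the geometric decay $|1-x|^{-n}\leqslant(\gamma_2/y)^n$; summing gives $\mo(y^{p-N}\me^{y})$, which is absorbed into the error after choosing $N$ large in terms of the desired order $M$. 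To repair your argument you should either adopt this truncation device (in which case your remaining steps go through essentially verbatim), or actually prove the uniform exponential expansion --- but the latter is an open problem in the paper, not a step you can wave through with an Euler integral.
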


Our second result gives the full asymptotics of $\Psi_1$ for two large arguments in the remaining cases. These two results give a complete answer to Wald and Henkel's problem.

\begin{theorem}\label{Theorem: Psi_1 asymptotics in the remaining case}
	Assume that $a,b\in\CC,\,c,c'\in\CC\sm\ZZ_{\leqslant 0}$ and $a-b,a-c,b-c\in\CC\sm\ZZ$. Let $w>0$ be a number such that $w>\max\left\{\Re(a-b)+1,\Re(a-c)+2\right\}$ and that the fractional parts of $w-\Re(a-b)$ and $w-\Re(a-c)-1$ are both in the interval $(\ve,1)$, where $\ve>0$ is small.
	
	Then, under the condition
	\begin{equation}\label{Psi_1 remaining case}
		x\to\infty,y\to\infty,\quad \left|\arg(1-x)\right|<\pi,\quad \left|\arg(-y)\right|<\pi,\quad 0<\gamma_1\leqslant\left|\frac{y}{1-x}\right|\leqslant \gamma_2<\infty
	\end{equation}
	and under the restriction that $y$ is bounded away from the points $b-a+k\,(k\in\ZZ)$,
	the function $\Psi_1\equiv\Psi_1[a,b;c,c';x,y]$ admits the asymptotic expansion
	\begin{align*}
		\Psi_1={}& \frac{\Gamma(c)\Gamma(b-a)}{\Gamma(b)\Gamma(c-a)}A_1(x,y)+\frac{\Gamma(c)\Gamma(c')\Gamma(a-b)}{\Gamma(a)\Gamma(c-b)\Gamma(b-a+c')}A_2(x,y)+\frac{\Gamma(c)\Gamma(c')}{\Gamma(a)\Gamma(c-b)}A_3(x,y)\\
		& +\mo\left(\left|y\right|^{-\Re(b)-w}\right)+\mo\left(\left|y\right|^{\Re(a-2b-c')-N}\me^{\Re(y)}\right),
	\end{align*}
	where
	\begin{align*}
		A_1(x,y)={}&\sum_{k=0}^{M}\frac{\left(a\right)_k\left(c-b\right)_k}{\left(a-b+1\right)_k k!}
		\,_2F_2\left[\begin{matrix}
			a-c+1,a+k \\
			c',a-b+1+k
		\end{matrix};\frac{y}{1-x}\right]\left(1-x\right)^{-a-k},\\
		A_2(x,y)={}&\sum_{k=0}^{M}\frac{\left(a-b\right)_k\left(a-b-c'+1\right)_k}{k!}\,_2F_2\left[\begin{matrix}
			b,b-c+1-k\\
			b-a+1-k,b-a+c'-k
		\end{matrix};\frac{y}{1-x}\right]\left(\frac{y}{x-1}\right)^b\left(-y\right)^{-a-k},\\
		A_3(x,y)={}&y^{a-2b-c'}\me^y\sum_{k=0}^{N-1}a_k(x,y)y^{-k},
	\end{align*}
	with $M=\lfloor w+\Re(b-a)\rfloor\geqslant 1$, $N$ being any positive integer and $a_k(x,y)$ given by \eqref{coefficients a_k(x,y)}.
\end{theorem}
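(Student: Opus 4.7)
The plan is to start from Theorem \ref{Theorem: Psi_1 series representation}, which applies because under \eqref{Psi_1 remaining case} the boundedness of $y/(1-x)$ together with $y\to\infty$ forces $|x-1|\to\infty$. This splits $\Psi_1$ into $(1-x)^{-a}V_1(x,y)$ and $(1-x)^{-b}V_2(x,y)$, to be treated by different mechanisms.

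For $(1-x)^{-a}V_1$, the argument $y/(1-x)$ of the inner ${}_2F_2$ stays in the bounded annulus $[\gamma_1,\gamma_2]$ by hypothesis, so each term of $V_1$ is $\mo(1)$ uniformly in $n$ while the factor $(1-x)^{-n}$ drives geometric convergence. I would simply truncate at $n=M=\lfloor w+\Re(b-a)\rfloor$: the kept terms reproduce exactly $\tfrac{\Gamma(c)\Gamma(b-a)}{\Gamma(b)\Gamma(c-a)}A_1(x,y)$, while the tail is $\mo(|1-x|^{-a-M-1})$, which by the fractional-part calibration of $w$ becomes $\mo(|y|^{-\Re(b)-w})$.

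For $(1-x)^{-b}V_2$, the inner ${}_2F_2[a-c+1,a-b-n;c',a-c+1-n;y]$ has large argument $y$ and two parameters shifting with $n$. The plan is to invoke the sharp uniform-in-$n$ asymptotic of ${}_2F_2[\alpha,\beta-n;\gamma,\delta-n;z]$ announced in the abstract. For large $z$ this splits the function into an algebraic part driven by the residue at $\beta-n$ (of size $(-z)^{n-\beta}$ times a descending series) and an exponential part $\me^{z}z^{\alpha+\beta-\gamma-\delta}$ times a formal series in $z^{-1}$; the residue at the fixed $\alpha=a-c+1$ contributes zero because of a $\Gamma(-n)$ factor in its coefficient, so no sub-dominant algebraic term survives. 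Uniformity in $n$ is precisely what will legitimise interchanging the $n$-summation of $V_2$ with this splitting, and this uniformity is the technical engine of the paper.

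The remaining, combinatorially heaviest, step is the resummation. After multiplying the algebraic pieces by the weight $\tfrac{(b)_n(c-a)_n}{(b-a+1)_n n!}(1-x)^{-n}$ and summing, the factors $(-y)^{n}/(1-x)^{n}$ assemble into powers of $-y/(1-x)$ and combine with the surviving Pochhammer structure into a hypergeometric generating function, which I expect to identify as $\tfrac{\Gamma(c)\Gamma(c')\Gamma(a-b)}{\Gamma(a)\Gamma(c-b)\Gamma(b-a+c')}A_2(x,y)$ truncated at order $M$. For the exponential contributions, the $n$-sums of products of Pochhammer symbols should collapse, via a Pfaff--Saalsch\"utz-type contraction and double-index reordering, into the terminating ${}_3F_2\left[\begin{matrix}-j,j-k,c+c'-a-1\\b-a+c',a-b-k\end{matrix};1\right]$ values of \eqref{coefficients a_k(x,y)}, yielding $\tfrac{\Gamma(c)\Gamma(c')}{\Gamma(a)\Gamma(c-b)}A_3(x,y)$ with remainder $\mo(|y|^{\Re(a-2b-c')-N}\me^{\Re(y)})$. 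The hardest part will be this double-index combinatorial identification together with establishing the sharp uniform ${}_2F_2$ estimate when $n$ is comparable to $|z|$; the hypotheses that $y$ avoid the points $b-a+k$ and that $w$ satisfy the stated fractional-part conditions are there precisely to prevent small denominators in the algebraic piece and to keep the truncation cut-off away from critical indices.
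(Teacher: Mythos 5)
Your overall route is the paper's: start from Theorem \ref{Theorem: Psi_1 series representation}, truncate $V_1$ at $n=M$ to obtain $A_1$ with tail $\mo\left(\left|y\right|^{-\Re(a)-M-1}\right)=\mo\left(\left|y\right|^{-\Re(b)-w}\right)$, and extract $A_2$ (from the residues at the shifted parameter $a-b-n$) and $A_3$ (from the exponential contribution) out of $V_2$ by a double-index resummation. Your observation that the residues at the fixed parameter $a-c+1$ contribute nothing is correct for the right reason: in the notation of Theorem \ref{Theorem: 2F2(-n;-z) expansion} the coefficient carries $\Gamma(d-a-n-k)=\Gamma(-n-k)$ in the \emph{denominator}, which is exactly why the paper's $S_w$ term vanishes for these parameters.

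The one genuine gap is in how you propose to handle the exponential part of $V_2$. The sharp uniform-in-$n$ result (Theorem \ref{Theorem: 2F2(-n;-z) expansion}) delivers only the algebraic series $S_n$, $T_n$ explicitly; the exponentially large contribution enters there solely through the error bound $\mo\bigl((n+1)^{2|b-d|}|z|^{\Re(a+b-c-d)}\me^{-\Re(z)}\bigr)$, not as a series in $z^{-1}$ with explicit coefficients. An exponential expansion with coefficients $c_{k,n}$ valid uniformly in $n$ is precisely what the paper does \emph{not} have --- it is stated only as a conjecture in Section \ref{Section: 4}. So ``interchanging the $n$-summation with this splitting'' for all $n$ is not legitimised by the available tools. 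The working mechanism (already used in the proof of Theorem \ref{Theorem: Psi_1 asymptotics for y positive}) is to split the $n$-sum at a finite threshold $N$: for $0\leqslant n\leqslant N$ one uses the non-uniform Lin--Wong expansion \eqref{2F2 complete expansion}, whose coefficients $c_{k,n}$ are then reorganised into the terminating ${}_3F_2$'s of \eqref{coefficients a_k(x,y)}; for $n>N$ one uses only the uniform bound, where the polynomial growth in $n$ is beaten by $\left|1-x\right|^{-n}\leqslant(\gamma_2/|y|)^n$, so the entire tail is absorbed into $\mo\left(\left|y\right|^{\Re(a-2b-c')-N}\me^{\Re(y)}\right)$. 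With this two-regime split inserted your plan coincides with the paper's proof; without it, the resummation of the exponential part over all $n$ is unjustified. Relatedly, the difficulty is not a uniform estimate ``when $n$ is comparable to $|z|$'' --- Theorem \ref{Theorem: 2F2(-n;-z) expansion} already holds for all $n\in\ZZ_{\geqslant 0}$ with constants polynomial in $n$ --- but exactly this division of labour between explicit non-uniform expansions for small $n$ and uniform error bounds for the tail.
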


The paper is organized as follows. In Section \ref{Section: 2}, we demonstrate some estimates of the generalized hypergeometric functions ${}_2F_2[a,b\pm n;c,d\pm n;z]$ when $z$ goes all the way to infinity, which are valid uniformly for $n\in\ZZ_{\geqslant 0}$. In Section \ref{Section: 3}, we use the tools in Section \ref{Section: 2} to prove our main results. The paper concludes with further remarks in Section \ref{Section: 4}.

\textbf{Notation.} The number $C$ generically denotes a positive constant independent of the summation index $n$ and the variable $z$. By $f(n,z)=\mo(a_n g(z))\,(z\in\Omega)$, we mean that there exists a constant $K>0$ independent of $n$ and $z$ such that
\[|f(n,z)|\leqslant K|a_n||g(z)|,\quad n\in\ZZ_{\geqslant 0},\,z\in\Omega.\]
Moreover, the generalized hypergeometric function $_pF_q$ is defined by \cite[Equation (16.2.1)]{NIST-Handbook}
\begin{equation}\label{pFq definition}
	{}_pF_q\left[\begin{matrix}
		a_1,\cdots,a_p\\
		b_1,\cdots,b_q
	\end{matrix};z\right]
	\equiv
	{}_pF_q[
	a_1,\cdots,a_p;
	b_1,\cdots,b_q;z]
	:=\sum_{n=0}^{\infty}\frac{(a_1)_n\cdots(a_p)_n}{(b_1)_n\cdots(b_q)_n}\frac{z^n}{n!},
\end{equation}
where $a_1,\cdots,a_p\in\CC$ and $b_1,\cdots,b_q\in\CC\sm\ZZ_{\leqslant 0}$. Empty products and sums are taken as $1$ and $0$, respectively.

\section{Auxiliary results}\label{Section: 2}
In this section, we prove some auxiliary results, including three lemmas about explicit bounds for the ratio of two gamma functions: (i) the first is beneficial to bound the tail series; (ii) the second generalizes the simple bound of gamma ratio \cite[Equation (5.6.8)]{NIST-Handbook}; (iii) the third guarantees the uniformity of explicit bounds for ${}_2F_2[a,b\pm n;c,d\pm n;z]$ obtained in the ensuing theorems.

\begin{lemma}[{\cite[Lemma 2.1]{Hang_Luo 2024}}]\label{Lemma: bound for ratio of Pochhammer symbol}
	If $a\in\CC$ and $b\in\CC\setminus\ZZ_{\leqslant 0}$, then
	\begin{equation}\label{ratio of Pochhammer symbols}
		\left|\frac{\left(a\right)_n}{\left(b\right)_n}\right|\leqslant C\left(n+1\right)^{\Re(a-b)},\qquad n\in\ZZ_{\geqslant 0}.
	\end{equation}
\end{lemma}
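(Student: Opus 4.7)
The strategy is to reduce the claim to the classical asymptotic of a ratio of gamma functions, and then upgrade the resulting pointwise asymptotic into a bound valid uniformly for all $n\in\ZZ_{\geqslant 0}$.

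First I would dispose of the degenerate case $a\in\ZZ_{\leqslant 0}$. Writing $a=-m$ with $m\in\ZZ_{\geqslant 0}$, one has $(a)_n=0$ for every $n>m$, so the inequality reduces to a finite assertion about the indices $n=0,1,\ldots,m$, each well defined thanks to the hypothesis $b\notin\ZZ_{\leqslant 0}$, and the bound holds trivially upon taking $C$ sufficiently large. In the generic case $a\notin\ZZ_{\leqslant 0}$, both $\Gamma(a)$ and $\Gamma(b)$ are finite and nonzero, so
\[\frac{(a)_n}{(b)_n}=\frac{\Gamma(b)}{\Gamma(a)}\cdot\frac{\Gamma(n+a)}{\Gamma(n+b)}.\]

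The heart of the argument is the standard asymptotic for the ratio of gamma functions \cite[Equation (5.11.12)]{NIST-Handbook}:
\[\frac{\Gamma(n+a)}{\Gamma(n+b)}=n^{a-b}\bigl(1+\mo(n^{-1})\bigr)\qquad\text{as }n\to\infty.\]
Taking moduli, one obtains an integer $N_0\geqslant 1$ and a constant $C_1>0$ with $\left|\Gamma(n+a)/\Gamma(n+b)\right|\leqslant C_1\,n^{\Re(a-b)}$ for every $n\geqslant N_0$. Since $(n+1)/2\leqslant n\leqslant n+1$ for $n\geqslant 1$, this can be rephrased as
\[\left|\frac{(a)_n}{(b)_n}\right|\leqslant C_2\,(n+1)^{\Re(a-b)},\qquad n\geqslant N_0,\]
where $C_2$ absorbs the prefactor $\left|\Gamma(b)/\Gamma(a)\right|$ together with the power $2^{|\Re(a-b)|}$.

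For the finitely many remaining indices $0\leqslant n<N_0$, the quantity $\left|(a)_n/(b)_n\right|\cdot(n+1)^{-\Re(a-b)}$ takes only finitely many finite values, hence has a maximum $C_3$; setting $C:=\max\{C_2,C_3\}$ secures the bound uniformly in $n$. The only real obstacle is bookkeeping, to merge the asymptotic regime $n\geqslant N_0$ with the initial segment under a single constant independent of $n$; no genuinely new ingredient beyond Stirling-type asymptotics for $\Gamma$ is required.
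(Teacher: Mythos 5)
Your proof is correct, and it follows the standard route that the cited source [Hang--Luo, SIGMA \textbf{20} (2024), Lemma 2.1] also takes: write $(a)_n/(b)_n$ as $\Gamma(b)\Gamma(n+a)/(\Gamma(a)\Gamma(n+b))$ and invoke the gamma-ratio asymptotic $\Gamma(n+a)/\Gamma(n+b)\sim n^{a-b}$, absorbing the finitely many initial indices (and the degenerate case $a\in\ZZ_{\leqslant 0}$) into the constant. The present paper does not reprove the lemma but merely imports it, so there is nothing further to compare.
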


\begin{lemma}\label{Lemma: bound of gamma ratio}
	If $\Re(b)>\Re(a)\geqslant 0$ and $\Re(z)>|\Im(a)|$, then
	\[\left|\frac{\Gamma(z+a)}{\Gamma(z+b)}\right|\leqslant\frac{\Gamma(\Re(b-a))}{|\Gamma(b-a)|}\me^{\frac{\pi}{2}|\Im(a-b)|}\left(|z|+\Re(a)\cos\theta+\Im(a)\sin\theta\right)^{\Re(a-b)},\]
	where $\theta=\arg(z)\in (-\frac{\pi}{2},\frac{\pi}{2})$.
\end{lemma}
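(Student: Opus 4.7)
The plan is to begin with the classical Beta-type integral representation
\[\frac{\Gamma(z+a)}{\Gamma(z+b)}=\frac{1}{\Gamma(b-a)}\int_0^\infty e^{-u(z+a)}(1-e^{-u})^{b-a-1}\,du,\]
which is valid under our hypotheses since $\Re(b-a)>0$ and $\Re(z+a)=\Re(z)+\Re(a)>0$ (using $\Re(z)>|\Im(a)|\geq 0$ and $\Re(a)\geq 0$; in particular $\Re(z)>0$, so $\theta:=\arg z\in(-\pi/2,\pi/2)$ and $\cos\theta>0$). The core step is to rotate the contour from the positive real axis to the ray $L_\theta:=\{re^{-i\theta}:r\geq 0\}$ by Cauchy's theorem: the integrand is holomorphic in the sector swept out, and on the connecting arc of radius $R$ the factor $|e^{-u(z+a)}|$ decays like $e^{-R|z|\cos\theta+O(1)}$ because the minimum of $\Re(e^{i\varphi}z)$ over the intermediate angles equals $|z|\cos\theta>0$.

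This rotation makes the $|z|$-decay explicit: one computes
\[u(z+a)=re^{-i\theta}\bigl(|z|e^{i\theta}+a\bigr)=r|z|+re^{-i\theta}a,\]
so that $\Re(u(z+a))=r\mu$ with $\mu:=|z|+\Re(a)\cos\theta+\Im(a)\sin\theta$ and $|e^{-u(z+a)}|=e^{-r\mu}$. For the complex power one writes
\[\bigl|(1-e^{-re^{-i\theta}})^{b-a-1}\bigr|=\bigl|1-e^{-re^{-i\theta}}\bigr|^{\Re(b-a)-1}\,e^{-\arg(1-e^{-re^{-i\theta}})\cdot\Im(b-a)}.\]
Since $\Re(1-e^{-re^{-i\theta}})=1-e^{-r\cos\theta}\cos(r\sin\theta)>0$ for $r>0$ and $|\theta|<\pi/2$, one has $|\arg(1-e^{-re^{-i\theta}})|<\pi/2$, so the exponential contributes at most the factor $e^{\pi|\Im(a-b)|/2}$ present in the stated bound. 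Combined with the elementary inequality $|1-e^{-v}|\leq|v|$ for $\Re(v)\geq 0$ (coming from $1-e^{-v}=\int_0^v e^{-s}\,ds$), one obtains $|1-e^{-re^{-i\theta}}|^{\Re(b-a)-1}\leq r^{\Re(b-a)-1}$ whenever $\Re(b-a)\geq 1$, and the remaining integral collapses to a Gamma function:
\[\int_0^\infty e^{-r\mu}\,r^{\Re(b-a)-1}\,dr=\Gamma(\Re(b-a))\,\mu^{\Re(a-b)}.\]
Multiplying by $1/|\Gamma(b-a)|$ and by $e^{\pi|\Im(a-b)|/2}$ then yields the asserted estimate.

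The main obstacle is the subrange $0<\Re(b-a)<1$, in which $|1-e^{-v}|\leq|v|$ runs the wrong way once raised to the negative power $\Re(b-a)-1$. In that regime I would instead invoke the reverse estimate $|1-e^{-re^{-i\theta}}|\geq 1-e^{-r\cos\theta}$ (obtained by directly comparing squared moduli), substitute $s=r\cos\theta$ to reduce the integral to $B(\mu/\cos\theta,\Re(b-a))/\cos\theta$, and then apply a Wendel-type bound on $\Gamma(x)/\Gamma(x+p)$ with $0<p<1$ to restore the shape $\mu^{\Re(a-b)}$. The delicate point is controlling any residual $\cos\theta$ factors cleanly enough for the final bound to hold uniformly in $\theta\in(-\pi/2,\pi/2)$.
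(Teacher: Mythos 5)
Your main line of argument coincides with the paper's: the paper starts directly from the rotated integral representation $\frac{\Gamma(z+a)}{\Gamma(z+b)}=\frac{\me^{-\mi\theta}}{\Gamma(b-a)}\int_0^{\infty}\me^{-|z|u-au\me^{-\mi\theta}}\bigl(1-\me^{-u\me^{-\mi\theta}}\bigr)^{b-a-1}\,\md u$ quoted from Paris--Kaminski (your contour rotation merely re-derives it), and then applies exactly your two estimates --- $|\arg(1-\me^{-u\me^{-\mi\theta}})|\leqslant\tfrac{\pi}{2}$ to extract the factor $\me^{\frac{\pi}{2}|\Im(a-b)|}$, and $|1-\me^{-\zeta}|\leqslant|\zeta|$ to reduce the remaining integral to a Gamma function. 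For $\Re(b-a)\geqslant 1$ your write-up is complete and correct.

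The obstacle you flag in the subrange $0<\Re(b-a)<1$ is genuine, and it is equally present in the paper's own proof, which raises the \emph{upper} bound $|1-\me^{-\zeta}|\leqslant|\zeta|$ to the negative power $\Re(b-a)-1$ without comment. But you should not try to patch that subrange: the inequality as stated is simply false there. Take $a=0$, $b=\tfrac12$, $z=1$ (so $\theta=0$): the right-hand side equals $\frac{\Gamma(1/2)}{\Gamma(1/2)}\cdot 1^{-1/2}=1$, while the left-hand side is $\Gamma(1)/\Gamma(3/2)=2/\sqrt{\pi}\approx 1.128$. More generally, Wendel's inequality gives $\Gamma(x)/\Gamma(x+s)>x^{-s}$ for $x>0$ and $0<s<1$, so no rearrangement of your Beta-function computation can recover the stated constant; the residual $(\cos\theta)^{\Re(b-a)-1}$ you worry about is only part of the difficulty. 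The correct repair is to add the hypothesis $\Re(b-a)\geqslant 1$ (or to weaken the constant), which costs nothing downstream: in the only place the lemma is invoked, namely Step 1 of the proof of Theorem \ref{Theorem: 2F2(-n;-z) expansion}, it is applied precisely when $\Re(b_1-a_1)=\Re(d-b)\geqslant 1$, the complementary case being handled there by a separate shifting argument.
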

\begin{proof}
	Recall the integral representation for the quotient of two gamma functions \cite[p. 33]{Paris_Kaminski 2001}: for $b>a\geqslant 0$ and $\Re(z)>0$,
	\[\frac{\Gamma(z+a)}{\Gamma(z+b)}=\frac{\me^{-\mi\theta}}{\Gamma(b-a)}\int_0^{\infty}\me^{-|z|u-au\me^{-\mi\theta}}\left(1-\me^{-u\me^{-\mi\theta}}\right)^{b-a-1}\md u.\]
	By analytic continuation, it is valid for $\Re(b)>\Re(a)\geqslant 0$ and $\Re(z)>|\Im(a)|$. Note that
	\[1-\me^{-u\me^{-\mi\theta}}=1-\me^{-u\cos\theta}\cos(u\sin\theta)-\mi\cdot \me^{-u\cos\theta}\sin(u\sin\theta)\]
	lies in the right half-plane and $|1-\me^{-\zeta}|\leqslant |\zeta|$ when $\left|\arg(\zeta)\right|\leqslant\frac{\pi}{2}$. Hence
	\begin{align*}
		\left|\frac{\Gamma(z+a)}{\Gamma(z+b)}\right|&\leqslant \frac{1}{|\Gamma(b-a)|}\me^{\frac{\pi}{2}|\Im(a-b)|}\int_0^{\infty}\me^{-(|z|+\Re(a)\cos\theta+\Im(a)\sin\theta)u}u^{\Re(b-a)-1}\md u\\
		& =\frac{\Gamma(\Re(b-a))}{|\Gamma(b-a)|}\me^{\frac{\pi}{2}|\Im(a-b)|}\left(|z|+\Re(a)\cos\theta+\Im(a)\sin\theta\right)^{\Re(a-b)},
	\end{align*}
	where the identity 
	\[
	\int_{0}^{\infty}u^{\alpha-1}\me^{-\lambda u}\md u=\frac{\Gamma(\alpha)}{\lambda^{\alpha}}~~~(\min\{\Re(\alpha),\Re(\lambda)\}>0)
	\] 
	is used.
\end{proof}

\begin{lemma}\label{Lemma: uniform bound of Pochhammer ratio}
	If $a,b\in \CC$, then
	\[\prod_{j=1}^{n}\left|\frac{-a+j-z}{-b+j-z}\right|\leqslant C\left(n+1\right)^{2|a-b|},\quad n\in\ZZ_{\geqslant 0}\]
	holds for $z$ bounded away from the points $-b+1,\cdots,-b+n$.
\end{lemma}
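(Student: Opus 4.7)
The plan is to reduce the inequality to a uniform estimate on a sum of reciprocals. Set $\alpha:=a+z$ and $\beta:=b+z$, so that $-a+j-z=j-\alpha$, $-b+j-z=j-\beta$, and $\alpha-\beta=a-b$; the hypothesis that $z$ stays bounded away from the points $-b+j$ translates to $|j-\beta|\geqslant \delta$ for every $j\in\{1,\ldots,n\}$ and some fixed $\delta>0$. Writing each factor as $\frac{j-\alpha}{j-\beta}=1+\frac{\beta-\alpha}{j-\beta}$ and combining the triangle inequality with $1+t\leqslant\me^{t}$ for $t\geqslant 0$, one gets
\[
\prod_{j=1}^{n}\left|\frac{j-\alpha}{j-\beta}\right|\leqslant \prod_{j=1}^{n}\left(1+\frac{|a-b|}{|j-\beta|}\right)\leqslant \exp\!\left(|a-b|\,S_{n}\right),\qquad S_{n}:=\sum_{j=1}^{n}\frac{1}{|j-\beta|}.
\]
Thus the task reduces to proving $S_{n}\leqslant 2\log(n+1)+C_{0}(\delta)$ uniformly in $z$, for then the bound follows with $C=\me^{|a-b|C_{0}(\delta)}$.

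To estimate $S_{n}$, put $r:=\Re(\beta)$ and use $|j-\beta|\geqslant |j-r|$. If $r\leqslant \tfrac{1}{2}$ or $r\geqslant n+\tfrac{1}{2}$ (so $\Re(\beta)$ lies outside the bulk $\{1,\ldots,n\}$), the values $1/|j-r|$ are majorized by a single shifted harmonic series $\sum_{k=1}^{n}\frac{1}{k-1/2}$, which gives $S_{n}\leqslant \log n+O(1)$. If instead $r\in(\tfrac{1}{2}, n+\tfrac{1}{2})$, let $j_{0}$ be the integer nearest $r$; the single term $\frac{1}{|j_{0}-\beta|}$ is bounded by $1/\delta$ by hypothesis, while for $j\neq j_{0}$ the estimate $|j-r|\geqslant |j-j_{0}|-\tfrac{1}{2}$ splits the remaining sum into two shifted partial harmonic sums (one over $j<j_{0}$, one over $j>j_{0}$), each of size at most $\log n+O(1)$. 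The two cases together yield $S_{n}\leqslant 2\log(n+1)+C_{0}(\delta)$, as required.

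The main obstacle is ensuring that the coefficient of $\log(n+1)$ in the exponential estimate is no larger than $2$, so the resulting power is $(n+1)^{2|a-b|}$ rather than something worse. This factor $2$ is sharp and arises only in the bulk case $r\in(\tfrac{1}{2},n+\tfrac{1}{2})$, where two harmonic-type contributions straddle $j_{0}$; the single term that would otherwise diverge—namely $j=j_{0}$—is precisely the one rescued by the bounded-away hypothesis, and every other step is an elementary inequality. Note also that the constant $C$ depends on $\delta$ but is uniform in $n$ and $z$, which is exactly what the intended applications to ${}_{2}F_{2}[a,b\pm n;c,d\pm n;z]$ require.
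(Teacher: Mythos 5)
Your proof is correct and follows essentially the same route as the paper's: bound each factor by $1+|a-b|/|j-\beta|$, exponentiate to reduce to the harmonic-type sum $\sum_j |j-\beta|^{-1}$, use the bounded-away hypothesis only for the single nearest index, and obtain the exponent $2|a-b|$ from the two one-sided harmonic sums straddling that index. The paper's case analysis on $\Re(z+b)$ and its comparison point $z_0$ on the $\ve$-circle are just a slightly different packaging of your reduction to $r=\Re(\beta)$ and the nearest integer $j_0$.
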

\begin{proof}
	Suppose that $n\geqslant 1$ and $z\in\bigcup_{j=1}^n\big\{|z+b-j|\geqslant \ve\big\}$ for some $\ve\in\big(0,\frac{1}{4}\big)$. Denote
	\[P_n(z):=\prod_{j=1}^{n}\left|\frac{-a+j-z}{-b+j-z}\right|.\]
	
	\textbf{Case 1.} If $\Re(z+b)\in(k-\ve,k+\ve)$ for some $k\in\ZZ\cap[1,n]$, there is a point $z_0=-b+k+\ve\,\me^{\mi\beta}$ with $\Re(z)=\Re(z_0)$, such that for $j\in\ZZ\cap[1,n]$,
	\[\left|-b+j-z\right|\geqslant \left|-b+j-z_0\right|.\]
	Therefore,
	\begin{align*}
		P_n(z)& \leqslant\prod_{j=1}^{n}\left(1+\frac{|a-b|}{\left|-b+j-z_0\right|}\right)=C\prod_{\substack{1\leqslant j\leqslant n\\|j-k|\geqslant 2}}\left(1+\frac{|a-b|}{\left|j-k-\ve\,\me^{\mi\beta}\right|}\right)\\
		& \leqslant C\prod_{\substack{1\leqslant j\leqslant n\\|j-k|\geqslant 2}}\left(1+\frac{|a-b|}{|j-k|-4^{-1}}\right)\leqslant C\prod_{1\leqslant\ell\leqslant n}\left(1+\frac{|a-b|}{\ell}\right)^2\\
		& \leqslant C\cdot\exp\left(\sum_{1\leqslant \ell\leqslant n}\frac{2|a-b|}{\ell}\right)\leqslant C\left(n+1\right)^{2|a-b|},
	\end{align*}
	where the inequality $\sum_{j=1}^{n}\frac{1}{j}\leqslant 1+\log n$ is used.
	
	\textbf{Case 2.} The proof is akin to that of \textbf{Case 1}, if one of the following conditions holds: (i) $\Re(z+b)\in[k+\ve,k+1-\ve]$ for some $k\in\ZZ\cap[1,n]$, (ii) $\Re(z+b)\leqslant -\ve$, (iii) $\Re(z+b)\geqslant n+\ve$.
\end{proof}

Now we establish explicit expansions of ${}_2F_2[a,b\pm n;c,d\pm n;z]$ in different cases. The first gives expansions of ${}_2F_2[a,b-n;c,d-n;z]$ for large $z\in\CC\sm\RR_{\geqslant 0}$.

\begin{theorem}\label{Theorem: 2F2(-n;-z) expansion}
	Assume that $a,b\in\CC,\,c\in\CC\sm\ZZ_{\geqslant 0}$ and $d,a-b\in\CC\sm\ZZ$. Let $w>0$ be such that $w>\max\{\Re(a),\Re(b),\Re(d)\}$ and that the fractional parts of $w-\Re(a)$ and $w-\Re(b)$ are both in the interval $(\ve,1)$, where $\ve>0$ is a small number. Then for any $n\in\ZZ_{\geqslant 0}$,
	\begin{equation}\label{2F2 uniform asymptotic expansion for -n}
		{}_2F_2\left[\begin{matrix}
			a,b-n\\
			c,d-n
		\end{matrix};-z\right]=\frac{\Gamma(c)\Gamma(d-n)}{\Gamma(a)\Gamma(b-n)}\left\{S_n(z)+T_n(z)+R_{n,w}(z)\right\}
	\end{equation}
	as $z\to\infty$ such that $\left|\arg(z)\right|<\pi$ and $z$ is bounded way from the points $-b+k\,(k\in\ZZ)$, where
	\begin{align*}
		S_n(z)&=\sum_{k=0}^{\lfloor w-\Re(a)\rfloor}\frac{\Gamma(a+k)\Gamma(b-a-n-k)}{\Gamma(c-a-k)\Gamma(d-a-n-k)}\frac{\left(-1\right)^k}{k!}z^{-a-k},\\
		T_n(z)& =\sum_{k=0}^{\lfloor w-\Re(b)\rfloor+n}\frac{\Gamma(b-n+k)\Gamma(a-b+n-k)}{\Gamma(d-b-k)\Gamma(c-b+n-k)}\frac{\left(-1\right)^k}{k!}z^{n-b-k},
	\end{align*}
	and
	\[R_{n,w}(z)=\mo\left(\left(n+1\right)^{\max\{0,\Re(d-b)\}}\left|z\right|^{-w}\right)+\mo\left(\left(n+1\right)^{2|b-d|}\left|z\right|^{\Re(a+b-c-d)}\me^{-\Re(z)}\right).\]
\end{theorem}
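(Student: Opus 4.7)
The plan is to work from the Mellin--Barnes integral representation
\[{}_2F_2\left[\begin{matrix}a,b-n\\c,d-n\end{matrix};-z\right]=\frac{\Gamma(c)\Gamma(d-n)}{\Gamma(a)\Gamma(b-n)}\cdot\frac{1}{2\pi\mi}\int_{L}\frac{\Gamma(a+s)\Gamma(b-n+s)\Gamma(-s)}{\Gamma(c+s)\Gamma(d-n+s)}z^{s}\md s,\]
valid for $|\arg z|<\pi$, where $L$ is a vertical contour separating the right-poles $s=0,1,2,\ldots$ of $\Gamma(-s)$ from the left-poles of $\Gamma(a+s)\Gamma(b-n+s)$. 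I would deform $L$ leftwards to a new vertical contour $L'$ at $\Re(s)=-w'$ for some $w'\in(w-\ve,w)$ chosen so that $L'$ passes strictly between consecutive crossed poles; the fractional-part hypotheses on $w-\Re(a)$ and $w-\Re(b)$ are what make this possible with positive slack independent of $n$. The crossed poles split into the $\lfloor w-\Re(a)\rfloor+1$ poles of $\Gamma(a+s)$ at $s=-a-k$ and the $n+\lfloor w-\Re(b)\rfloor+1$ poles of $\Gamma(b-n+s)$ at $s=n-b-k$. Each residue contributes a factor $(-1)^k/k!$, and upon evaluating the remaining gamma factors---in particular using $\Gamma(-s)|_{s=n-b-k}=\Gamma(b-n+k)$---one recovers $S_n(z)$ and $T_n(z)$ verbatim.

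To bound the tail integral over $L'$ uniformly in $n$, I would isolate the $n$-dependence by the telescoping identity
\[\frac{\Gamma(b-n+s)}{\Gamma(d-n+s)}=\frac{\Gamma(b+s)}{\Gamma(d+s)}\prod_{j=1}^{n}\frac{d+s-j}{b+s-j},\]
combined with an analogous factorisation of $\Gamma(d-n)/\Gamma(b-n)$. Lemma \ref{Lemma: uniform bound of Pochhammer ratio} applied to the finite product yields $C(n+1)^{2|b-d|}$ uniformly in $s\in L'$, and together with a Stirling estimate on the $n$-free quotient $\Gamma(a+s)\Gamma(b+s)\Gamma(-s)/[\Gamma(c+s)\Gamma(d+s)]$ this produces the exponentially small remainder $\mo((n+1)^{2|b-d|}|z|^{\Re(a+b-c-d)}\me^{-\Re(z)})$ after $L'$ is tilted (or closed into a Barnes-type contour) to handle the sector $|\arg z|\in[\pi/2,\pi)$. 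The sharper algebraic remainder $\mo((n+1)^{\max\{0,\Re(d-b)\}}|z|^{-w})$ is obtained by consolidating $\Gamma(d-n)\Gamma(b-n+s)/[\Gamma(b-n)\Gamma(d-n+s)]$ via the reflection formula into $\Gamma(n+1-b)\Gamma(n+1-d-s)/[\Gamma(n+1-d)\Gamma(n+1-b-s)]$ times a bounded trigonometric factor, and then applying Lemma \ref{Lemma: bound of gamma ratio} directly to this combined quotient; this consolidation avoids the cruder $(n+1)^{2|b-d|}$ loss incurred by bounding the two gamma ratios separately.

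The chief technical difficulty I expect is maintaining the $n$-uniformity over the full sector $|\arg z|<\pi$. On a vertical contour the Stirling decay $\me^{-\pi|\Im s|/2}$ is only marginal against $|z^s|=|z|^{-w'}\me^{-\arg(z)\Im s}$ and fails as $|\arg z|\to\pi$, so $L'$ must be tilted in a manner compatible with the $n$-dependent location of the just-traversed poles of $\Gamma(b-n+s)$ at $s=n-b-k$; it is on the indented or deformed arc that the exponentially small contribution $\me^{-\Re(z)}$ is extracted. The hypothesis that $z$ be bounded away from the lattice $-b+\ZZ$ is precisely the regularity that, after the Stokes-type deformation and the trigonometric factors produced by the reflection formulas, keeps every emerging constant independent of $n$, preserving the uniform character of the remainder estimate.
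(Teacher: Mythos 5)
Your proposal is correct and follows essentially the same route as the paper: a Mellin--Barnes representation whose contour is shifted past the poles at $s=-a-k$ and $s=n-b-k$ to produce $S_n(z)$ and $T_n(z)$, with the remainder split into a vertical piece (handled via the reflection-formula consolidation and Lemma \ref{Lemma: bound of gamma ratio}, giving the $(n+1)^{\max\{0,\Re(d-b)\}}|z|^{-w}$ term) and tilted arms (handled via the telescoping product, Lemma \ref{Lemma: uniform bound of Pochhammer ratio}, and a saddle-point deformation through $s=-z$, giving the exponentially small term). The two key devices you single out --- consolidating the $n$-dependent gamma ratios to avoid the $(n+1)^{2|b-d|}$ loss on the vertical segment, and isolating the $n$-dependence as a finite product on the oblique arms --- are exactly those used in the paper.
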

\begin{proof}
	Let $T$ be a positive number such that
	\[T>\max\big\{1,|\Im(a)|,|\Im(b)|,|\Im(c)|,|\Im(d)|\big\}.\]
	Denote $\theta=\arg(z)$. For $|\theta|<\pi$ and $n\in\ZZ_{\geqslant 0}$, define
	\begin{equation}\label{error term--integral}
		R_{n,w}(z)=\frac{1}{2\pi\mi}\int_{\mathcal{C}}h_n(s)z^s\md s,\quad h_n(s)=\frac{\Gamma(a+s)\Gamma(b-n+s)}{\Gamma(c+s)\Gamma(d-n+s)}\Gamma(-s),
	\end{equation}
	where $\mathcal{C}$ is a negative-oriented loop that consists of the vertical line
	\[L^v:\quad s=-w+\mi t,\quad |t|\leqslant T\]
	and the contours $L^{\pm}$ which pass to infinity in the directions $\pm \theta_0$ $\big(0<\theta_0<\frac{\pi}{2}\big)$. Further, $\mathcal{C}$ is taken to embrace all the poles of $\Gamma(-s)$ and the points
	\begin{equation}\label{poles at RHS of L^v}
		s=-a-k\ (0\leqslant k\leqslant\lfloor w-\Re(a)\rfloor),\quad s=-b+n-k\ (0\leqslant k\leqslant \lfloor w-\Re(b)\rfloor+n),
	\end{equation}
	and $L^{\pm}$ are taken to be bounded away from the points shown in \eqref{poles at RHS of L^v}.
	
	The convergence of the integral as $|s|\to\infty$ is guaranteed by the term $\exp\{-|s|\cos\theta_0\cdot\log|s|\}$ in the behaviour of the gamma function for large $|s|$. The use of Cauchy's residue theorem implies that
	\[{}_2F_2\left[\begin{matrix}
		a,b-n\\
		c,d-n
	\end{matrix};-z\right]=\frac{\Gamma(c)\Gamma(d-n)}{\Gamma(a)\Gamma(b-n)}(S_n(z)+T_n(z)+R_{n,w}(z)).\]
	See \cite[Section 2]{Lin_Wong 2018} for details. Divide $R_{n,w}(z)$ into two parts:
	\[R_{n,w}(z)=\frac{1}{2\pi\mi}\int_{L^v}+\frac{1}{2\pi\mi}\int_{L^{\pm}}=:R_v(z)+R_{\pm}(z).\]
	It remains to estimate the integrals $R_v(z)$ and $R_{\pm}(z)$, respectively.
	
	\textbf{Step 1.} \textit{Estimate of $R_v(z)$}. One may claim that for $|t|\leqslant T$,
	\begin{equation}\label{Gamma estimates in vertical line}
		\left|\frac{\Gamma(a-w+\mi t)}{\Gamma(c-w+\mi t)}\Gamma(-w-\mi t)\right|=\mo(1).
	\end{equation}
	Moreover,
	\begin{equation}\label{estimates of sine ratio in vertical line}
		\frac{\sin(d-w+\mi t)}{\sin(b-w+\mi t)}=\frac{\me^{\mi(d-w)-t}-\me^{-\mi(d-w)+t}}{\me^{\mi(b-w)-t}-\me^{-\mi(b-w)+t}}=\mo(1).
	\end{equation}
	
	If $|t|\leqslant \max\{1,|\Im(d)|\}$, apply \cite[Equations (5.6.6) and (5.6.7)]{NIST-Handbook} and \eqref{ratio of Pochhammer symbols} to yield
	\[g_n(t):=\left|\frac{\Gamma(1-d+w+n-\mi t)}{\Gamma(1-b+w+n-\mi t)}\right|\leqslant C\left(n+1\right)^{\Re(d-b)},\quad n\geqslant 0.\]
	If $|t|>\max\{1,|\Im(d)|\}$, take $r>0$ such that $w>r>\Re(d)$. Set $z_1=(1+w+n-r)-\mi(t+\Im(d))$ and then
	\[g_n(t)=\left|\frac{\Gamma(z_1+r-\Re(d))}{\Gamma(z_1+r+\mi\Im(d)-b)}\right|=:\left|\frac{\Gamma(z_1+a_1)}{\Gamma(z_1+b_1)}\right|.\]
	When $\Re(b_1-a_1)=\Re(d-b)\geqslant 1$, Lemma \ref{Lemma: bound of gamma ratio} shows that
	\[g_n(t)\leqslant \frac{\Gamma(\Re(d-b))}{\left|\Gamma(d-b)\right|}\me^{\frac{\pi}{2}|\Im(d-b)|}\left(|z_1|+(r-\Re(d))\cos\theta\right)^{\Re(b-d)}\leqslant C\left|z_1\right|^{\Re(b-d)}.\]
	When $\Re(b_1-a_1)<1$, let $A=\lceil \Re(a_1-b_1)\rceil$ and $a_1'=a_1-A-1$. Then $\Re(b_1-a_1')\geqslant 1$ and further,
	\begin{align*}
		g_n(t)& =\left|\frac{\Gamma(z_1+a_1')}{\Gamma(z_1+b_1)}(z_1+a_1-A-1)(z_1+a_1-A)\cdots(z_1+a_1-1)\right|\\
		& \leqslant C\left|z_1\right|^{\Re(a_1'-b_1)}\left|(z_1+a_1-A-1)\cdots(z_1+a_1-1)\right|\\
		& \leqslant C\left|z_1\right|^{\Re(a_1-b_1-A-1)}\left|z_1\right|^{A+1}=C\left|z_1\right|^{\Re(d-b)}\\
		& =C\left(\left(1+w+n-r\right)^2+\left(t+\Im(d)\right)^2\right)^{\frac{1}{2}\Re(d-b)}\\
		& \leqslant C\cdot \max\left\{\left(n+1\right)^{\Re(d-b)},\left(|t|+1\right)^{\Re(d-b)}\right\}.
	\end{align*}
	In summary,
	\begin{equation}\label{g_n(t) in vertical line}
		g_n(t)\leqslant C\left(n+1\right)^{\max\{0,\Re(d-b)\}}\left(|t|+1\right)^{\max\{0,\Re(d-b)\}}.
	\end{equation}
	
	Combining Euler's reflection formula with the estimates \eqref{Gamma estimates in vertical line}-\eqref{g_n(t) in vertical line} shows that for $s\in L^v$,
	\begin{align*}
		|h_n(s)|\me^{-\theta t}&= \left|\frac{\Gamma(a-w+\mi t)}{\Gamma(c-w+\mi t)}\frac{\sin(d-w+\mi t)}{\sin(b-w+\mi t)}\frac{\Gamma(1-d+w+n-\mi t)}{\Gamma(1-b+w+n-\mi t)}\Gamma(-w-\mi t)\right|\me^{-\theta t}\\
		& \leqslant C\left(n+1\right)^{\max\{0,\Re(d-b)\}}\left(|t|+1\right)^{\max\{0,\Re(d-b)\}}\me^{-\theta t},
	\end{align*}
	Hence
	\[|R_v(z)|\leqslant\frac{1}{2\pi}\left|z\right|^{-w}\int_{-T}^T |h_n(s)|\me^{-\theta t}\md t\leqslant C\left(n+1\right)^{\max\{0,\Re(d-b)\}}\left|z\right|^{-w}.\]
	
	\textbf{Step 2.} \textit{Estimate of $R_{\pm}(z)$}.	Due to \cite[Lemma 2.1]{Paris_Kaminski 2001} and Euler's reflection formula, we obtain that for $s\in L^{\pm}$,
	\begin{align*}
		h_0(s)& =\frac{\Gamma(1-c-s)\Gamma(1-d-s)}{\Gamma(1-a-s)\Gamma(1-b-s)}\Gamma(-s)\cdot\frac{\sin(\pi(c+s))\sin(\pi(d+s))}{\sin(\pi(a+s))\sin(\pi(b+s))}\\
		& =\rho_0(-s)\Gamma(-s+\alpha)\cdot\frac{\sin(\pi(c+s))\sin(\pi(d+s))}{\sin(\pi(a+s))\sin(\pi(b+s))},
	\end{align*}
	where $\alpha=a+b-c-d$ and $\rho_0(s)=\mo(1)$ for $s\to\infty$ uniformly in $\left|\arg(s)\right|\leqslant \pi-\delta\,(\delta>0)$. Clearly,
	\[\left|\frac{\sin(\pi(c+s))\sin(\pi(d+s))}{\sin(\pi(a+s))\sin(\pi(b+s))}\right|=\mo(1),\quad s\in L^{\pm}.\]
	Therefore, for $s\in L^{\pm}$, there are positive constants $K_1$ and $K_2$ independent of $n$ and $z$, such that when $|s|\geqslant K_1$ then
	\[|\rho_0(-s)|\leqslant K_2,\quad \left|\frac{\sin(\pi(c+s))\sin(\pi(d+s))}{\sin(\pi(a+s))\sin(\pi(b+s))}\right|\leqslant K_2.\]
	In addition, it follows from Lemma \ref{Lemma: uniform bound of Pochhammer ratio} that for $s\in L^{\pm}$ and $n\in\ZZ_{\geqslant 0}$,
	\[\prod_{j=1}^{n}\left|\frac{-d+j-s}{-b+j-s}\right|=\mo\left(\left(n+1\right)^{2|b-d|}\right).\]
	
	In view of the fact that
	\[h_n(s)=h_0(s)\prod_{j=1}^{n}\frac{d-j+s}{b-j+s}\]
	and the estimates above, we get
	\begin{align*}
		\left|R_{\pm}(z)\right|&\leqslant C\left(n+1\right)^{2|b-d|}\left(\int_{s\in L^{\pm},|s|<K_1}|h_0(s)z^s||ds|+\int_{s\in L^{\pm},|s|\geqslant K_1}|h_0(s)z^s||ds|\right)\\
		& \leqslant C\left(n+1\right)^{2|b-d|}\left(K_3+K_2^2\int_{s\in L^{\pm},|s|\geqslant K_1}\left|\Gamma(-s+\alpha)z^s\right||ds|\right).
	\end{align*}
	According to \cite[Lemma 2.8]{Paris_Kaminski 2001}, the contours $L^{\pm}$ can be deformed to pass through the saddle point $s=-z$ and to be bounded away from the points shown in \eqref{poles at RHS of L^v}. Hence, the integral in the last line has the order of magnitude $\mo\big(|z|^{\Re(\alpha)}\me^{-\Re(z)}\big)$.
	
	The expansion now \eqref{2F2 uniform asymptotic expansion for -n} follows from the estimates of $R_v(z)$ and $R_{\pm}(z)$.
\end{proof}

The second provides expansions of ${}_2F_2[a,b+n;c,d+n;z]$ for large $z\in\CC\sm\RR_{\geqslant 0}$.

\begin{theorem}\label{Theorem: 2F2(+n;-z) uniform bound}
	Assume that $a,b\in\CC$ and $c,d\in\CC\sm\ZZ_{\leqslant 0}$. Then for $n\in\ZZ_{\geqslant\max\{0,-\Re(a),-\Re(b)\}}$,
	\[{}_2F_2\left[\begin{matrix}
		a,b+n\\
		c,d+n
	\end{matrix};-z\right]=\left(n+1\right)^{\max\{0,\Re(d-b)\}}\mo(1)+\left(n+1\right)^{2|b-d|}\mo\left(\left|z\right|^{\Re(a+b-c-d)}\me^{-\Re(z)}\right)\]
	as $z\to\infty$ such that $\left|\arg(z)\right|<\pi$ and $z$ is bounded away from the points $-b+k\,(k\in\ZZ)$.
\end{theorem}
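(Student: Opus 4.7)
The strategy parallels that of Theorem~\ref{Theorem: 2F2(-n;-z) expansion}, but simplifies substantially: the $+n$ shift pushes the poles of $\Gamma(b+n+s)$, sitting at $s=-b-n-k$, ever further to the left as $n$ grows, so they never enter the contour and no analogue of the residue sum $T_n(z)$ appears---only the poles of $\Gamma(-s)$ at $s=0,1,2,\dots$ contribute, reproducing the ${}_2F_2$ series itself.

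Concretely, the plan is to start from the Mellin--Barnes representation
\[
{}_2F_2\left[\begin{matrix}a,b+n\\c,d+n\end{matrix};-z\right]=\frac{\Gamma(c)\Gamma(d+n)}{\Gamma(a)\Gamma(b+n)}\cdot\frac{1}{2\pi\mi}\oint_{\mathcal{C}}h_n(s)\,z^s\,\md s,\quad h_n(s)=\frac{\Gamma(a+s)\Gamma(b+n+s)\Gamma(-s)}{\Gamma(c+s)\Gamma(d+n+s)},
\]
where $\mathcal{C}$ is the same type of negatively-oriented loop used in Theorem~\ref{Theorem: 2F2(-n;-z) expansion}: a vertical segment $L^v\colon s=-w+\mi t$, $|t|\le T$ (with $w>0$ small), closed by tails $L^{\pm}$ extending to infinity in directions $\pm\theta_0\in(0,\frac{\pi}{2})$. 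Under the hypothesis $n\ge\max\{0,-\Re(a),-\Re(b)\}$, $w$ can be chosen small enough that $\mathcal{C}$ encloses only the poles of $\Gamma(-s)$, whence the identity follows from Cauchy's theorem, and the task reduces to estimating $\int_{L^v}$ and $\int_{L^{\pm}}$.

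On $L^v$, applying Euler's reflection formula to $\Gamma(b+n+s)/\Gamma(d+n+s)$ (exactly as in Step~1 of Theorem~\ref{Theorem: 2F2(-n;-z) expansion}'s proof) reduces the problem to estimating the gamma ratio $\Gamma(1-d-n-s)/\Gamma(1-b-n-s)$, which Lemma~\ref{Lemma: bound of gamma ratio} and the splitting trick used there handle after a case analysis on the sign of $\Re(d-b)$. Combined with the prefactor bound $|\Gamma(d+n)/\Gamma(b+n)|\le C(n+1)^{\Re(d-b)}$, the two $n$-factors collapse to $(n+1)^{\max\{0,\Re(d-b)\}}$; the $z$-contribution $|z^s|=|z|^{-w}\me^{-\theta t}$ is bounded as $z\to\infty$, yielding the first error term $(n+1)^{\max\{0,\Re(d-b)\}}\mo(1)$.

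On $L^{\pm}$, the decomposition $h_n(s)=h_0(s)\cdot(b+s)_n/(d+s)_n$ isolates all $n$-dependence into the Pochhammer ratio, which Lemma~\ref{Lemma: uniform bound of Pochhammer ratio} bounds by $C(n+1)^{2|b-d|}$ uniformly in $s$ once $L^{\pm}$ is kept away from the forbidden points (guaranteed by choosing $w$ small). The $n$-independent factor $h_0(s)$ is handled exactly as in Step~2 of Theorem~\ref{Theorem: 2F2(-n;-z) expansion}: one writes $h_0(s)=\rho_0(-s)\Gamma(-s+\alpha)$ times a bounded sine quotient (with $\alpha=a+b-c-d$ and $\rho_0=\mo(1)$), deforms $L^{\pm}$ through the saddle point $s=-z$ by means of \cite[Lemma~2.8]{Paris_Kaminski 2001}, and reads off the exponentially small estimate $\mo(|z|^{\Re(a+b-c-d)}\me^{-\Re(z)})$, which produces the second error term. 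The main obstacle, as in Theorem~\ref{Theorem: 2F2(-n;-z) expansion}, is keeping all estimates uniform in $n$---that is, merging the prefactor's $n$-factor with the $L^v$ integrand in Step~1, and applying Lemma~\ref{Lemma: uniform bound of Pochhammer ratio} uniformly along the deformed tails in Step~2---both reducing to routine variations on the arguments already used.
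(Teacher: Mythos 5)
Your proposal is correct and follows essentially the same route as the paper, whose entire proof consists of observing that ${}_2F_2[a,b+n;c,d+n;-z]=\frac{\Gamma(c)\Gamma(d+n)}{\Gamma(a)\Gamma(b+n)}R_{-n,0}(z)$ (the same Mellin--Barnes remainder integral with $n\mapsto -n$ and $w=0$, the contour indented at $s=0$, so that no residue sums $S_n$, $T_n$ arise) and then repeating the estimates of $R_v$ and $R_{\pm}$ from Theorem~\ref{Theorem: 2F2(-n;-z) expansion}. One small correction to your Step on $L^v$: for the $+n$ shift you should \emph{not} apply the reflection formula to $\Gamma(b+n+s)/\Gamma(d+n+s)$, since that produces gamma functions with large \emph{negative} real parts in their arguments; instead Lemma~\ref{Lemma: bound of gamma ratio} applies directly to the unreflected ratio, whose arguments $b+n-w+\mi t$ and $d+n-w+\mi t$ already lie far in the right half-plane, and this is what yields the $(n+1)^{\Re(b-d)}$ factor that combines with the prefactor to give $(n+1)^{\max\{0,\Re(d-b)\}}$.
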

\begin{proof}
	It is sufficient to observe that
	\[{}_2F_2\left[\begin{matrix}
		a,b+n\\
		c,d+n
	\end{matrix};-z\right]=\frac{\Gamma(c)\Gamma(d+n)}{\Gamma(a)\Gamma(b+n)}R_{-n,0}(z),\]
	where $R_{-n,0}(z)$ is defined in \eqref{error term--integral} (The contour is suitably indented in order to avoid the pole $s=0$ of $\Gamma(-s)$). The result follows by repeating the proof of Theorem \ref{Theorem: 2F2(-n;-z) expansion}.
\end{proof}

The third establishes explicit upper bounds of ${}_2F_2[a,b\pm n;c,d\pm n;z]$ for $z\to +\infty$ and refines the rough bounds in \cite[Lemma 3.3]{Hang_Luo 2024}.

\begin{theorem}\label{Theorem: 2F2(n;z) bound for z>0}
	Let $p:=\Re(a-c)+\max\{0,\Re(b-d)\}$.
	
	{\rm (i)} Assume that $a,b\in\CC,\,c\in\CC\sm\ZZ_{\leqslant 0}$ and $d\in\CC\sm\ZZ$. Then for $n\in\ZZ_{\geqslant 0}$,
	\[{}_2F_2\left[\begin{matrix}
		a,b-n\\
		c,d-n
	\end{matrix};z\right]=\mo\left(\left(n+1\right)^{2|b-d|}z^p\me^z\right),\quad z\to+\infty.\]
	
	{\rm (ii)} Assume that $a,b\in\CC$ and $c,d\in\CC\sm\ZZ_{\leqslant 0}$. Then for $n\in\ZZ_{\geqslant 0}$,
	\[{}_2F_2\left[\begin{matrix}
		a,b+n\\
		c,d+n
	\end{matrix};z\right]=\mo\left(\left(n+1\right)^{2|b-d|}z^p\me^z\right),\quad z\to+\infty.\]
\end{theorem}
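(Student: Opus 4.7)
The plan is to prove (i) and (ii) in parallel by reducing the problem to a single uniform bound on a confluent hypergeometric factor. First, assuming temporarily that $\Re(c)>\Re(a)>0$ (this restriction will be removed by analytic continuation in $a$ at the end), I would invoke the Euler-type integral representation
\[
{}_2F_2\!\left[\begin{matrix}a,b\pm n\\ c,d\pm n\end{matrix};z\right]
=\frac{\Gamma(c)}{\Gamma(a)\Gamma(c-a)}\int_0^1 t^{a-1}(1-t)^{c-a-1}\,{}_1F_1\!\left[\begin{matrix}b\pm n\\ d\pm n\end{matrix};zt\right]dt,
\]
followed by Kummer's transformation ${}_1F_1[b\pm n;d\pm n;zt]=e^{zt}\,{}_1F_1[d-b;d\pm n;-zt]$, which extracts the anticipated exponential $e^{zt}$ and leaves a confluent factor whose first parameter $d-b$ is independent of $n$.

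The crux of the argument is then a uniform estimate of the form
\[
\bigl|{}_1F_1[d-b;d\pm n;-x]\bigr|\leqslant C(n+1)^{2|b-d|}(1+x)^{\max\{0,\Re(b-d)\}},\qquad x\geqslant 0,\ n\in\ZZ_{\geqslant 0}.
\]
I would derive this by representing the confluent factor as a Mellin--Barnes integral analogous to \eqref{error term--integral} (with one fewer $\Gamma$-factor in both numerator and denominator) and then mirroring the two-step strategy of Theorem \ref{Theorem: 2F2(-n;-z) expansion}: the contour is split into a vertical segment $L^v$ and two legs $L^{\pm}$; the $\Gamma$-ratio involving $d\pm n+s$ is controlled uniformly in $n$ via Lemmas \ref{Lemma: bound of gamma ratio} and \ref{Lemma: uniform bound of Pochhammer ratio}, producing the factor $(n+1)^{2|b-d|}$; and the interpolation factor $(1+x)^{\max\{0,\Re(b-d)\}}$ emerges from the contribution of $L^v$ (dominant for small $x$) together with that of $L^{\pm}$ after deforming through the saddle (dominant for large $x$, giving the algebraic tail $x^{b-d}$ plus an exponentially small remainder).

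Substituting this bound back into the Euler representation reduces the task to estimating
\[
\int_0^1 t^{\Re(a)-1}(1-t)^{\Re(c-a)-1}\,e^{zt}\,(1+zt)^{\max\{0,\Re(b-d)\}}\,dt
\]
as $z\to+\infty$; the change of variables $u=z(1-t)$ concentrates the integrand near $t=1$ and, by Watson's lemma, yields an upper bound of order $z^{\Re(a-c)+\max\{0,\Re(b-d)\}}e^z=z^p e^z$. Combining these pieces produces the claimed bound $\mathcal{O}\bigl((n+1)^{2|b-d|}z^p e^z\bigr)$ in both cases (i) and (ii) simultaneously. The main obstacle is precisely the uniform ${}_1F_1$ bound in the middle step: the pole structure of $\Gamma(d\pm n+s)^{-1}$ shifts strongly with $n$, and the careful choice of contour---together with Lemma \ref{Lemma: uniform bound of Pochhammer ratio}---is what delivers the required uniformity in $n$ without which the final Laplace estimate would collapse.
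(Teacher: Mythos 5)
Your overall strategy (Euler integral over the $(a,c)$ pair, Kummer's transformation to extract $\me^{zt}$, a uniform bound on the remaining ${}_1F_1$, then a Laplace/Watson estimate at $t=1$) is coherent and does reproduce the exponent $p=\Re(a-c)+\max\{0,\Re(b-d)\}$, but as written it has a genuine gap in two places. First, the opening restriction $\Re(c)>\Re(a)>0$ cannot be "removed by analytic continuation in $a$ at the end": the statement to be proved is an $\mo$-inequality uniform in $n$ and $z$, and inequalities do not propagate by analytic continuation in a parameter. To cover general $a$ and $c\in\CC\sm\ZZ_{\leqslant 0}$ you would need either a Pochhammer double-loop version of the Euler integral (on which the endpoint analysis at $t=1$ must be redone) or contiguous relations in $a$ — and the latter is not innocuous, since the three-term relation lowering $a$ carries an explicit factor of $z$, which a priori worsens the power $z^{p}$ rather than lowering it. Second, the step you yourself identify as the crux, the uniform bound $\bigl|{}_1F_1[d-b;d\pm n;-x]\bigr|\leqslant C(n+1)^{2|b-d|}(1+x)^{\max\{0,\Re(b-d)\}}$, is asserted rather than proved, and it is not a routine consequence of the quoted lemmas: in the $-n$ case the nominally "exponentially small" contribution $\frac{\Gamma(d-n)}{\Gamma(d-b)}\me^{-x}x^{n-b}$ is in fact of polynomial size $\asymp n^{\Re(d-b)-1/2}$ in the transition region $x\asymp n$, and one must check (it works, but only because $\Re(d-b)\leqslant 2|b-d|$) that it stays under the claimed bound uniformly there. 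Without that verification the final Laplace estimate has nothing to stand on.

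It is also worth noting that the route the paper takes (via \cite[Lemma 3.3]{Hang_Luo 2024} refined by Lemma \ref{Lemma: uniform bound of Pochhammer ratio}) is considerably more elementary: one bounds the defining series \eqref{pFq definition} term by term, using Lemma \ref{Lemma: bound for ratio of Pochhammer symbol} to get $|(a)_k/(c)_k|\leqslant C(k+1)^{\Re(a-c)}$ and Lemma \ref{Lemma: uniform bound of Pochhammer ratio} (splitting the product at index $n$) to get $|(b\mp n)_k/(d\mp n)_k|\leqslant C(n+1)^{2|b-d|}(k+1)^{\max\{0,\Re(b-d)\}}$, after which $\sum_k (k+1)^{p}z^k/k!=\mo(z^{p}\me^{z})$ finishes the proof with no integral representation, no parameter restrictions, and no uniform ${}_1F_1$ asymptotics. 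If you want to salvage your approach, the honest version of it amounts to proving for ${}_1F_1$ exactly the kind of uniform Mellin--Barnes estimate that Theorem \ref{Theorem: 2F2(+n;-z) uniform bound} provides for ${}_2F_2$, which is more work than the problem requires.
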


\begin{proof}
	By following the proof of \cite[Lemma 3.3]{Hang_Luo 2024}, the results follow from Lemma {\rm\ref{Lemma: uniform bound of Pochhammer ratio}}.
\end{proof}

\section{Proof of the main results}\label{Section: 3}
We are now in a position to prove our main results.

\begin{proof}[\sl\textbf{Proof of Theorem} {\rm\ref{Theorem: Psi_1 asymptotics for y positive}}]
	Recall the series representation \eqref{Psi_1 series--|1-x|>1}. We have $V_1(x,y)=\mo(1)$ since $\frac{y}{1-x}$ is bounded.  Hence the main contribution of $\Psi_1$ comes from $V_2(x,y)$. Now divide $V_2(x,y)$ into two parts:
	\begin{equation}\label{V_2(x,y) two parts}
		V_2(x,y)=\sum_{n=0}^{N-1}+\sum_{n=N}^{\infty}=:V_2^{(L)}(x,y)+V_2^{(R)}(x,y),
	\end{equation}
	where $N$ is a positive integer to be determined.
	
	For $0\leqslant n\leqslant N-1$, recall the exponential expansion of ${}_2F_2$ \cite[Equation (5.8)]{Lin_Wong 2018}
	\begin{equation}\label{2F2 exponential expansion}
		{}_2F_2\left[\begin{matrix}
			a,b-n \\
			c,d-n
		\end{matrix};y\right]=\frac{\Gamma(c)\Gamma(d-n)}{\Gamma(a)\Gamma(b-n)}\me^y\sum_{k=0}^{N-1}c_{k,n} y^{a+b-c-d-k}+\mo\left(y^{\Re(a+b-c-d)-N}\me^{y}\right),
	\end{equation}
	as $y\to+\infty$, where the coefficients $c_{k,n}$ are given by \cite[Equation (12)]{Volkmer_Wood 2014}
	\begin{equation}\label{coefficients c_{n,k}}
		c_{k,n}:=\frac{\left(c+d-a-b\right)_k\left(n+1-b\right)_k}{k!}\,_3F_2\left[\begin{matrix}
			-k,c-a,d-a-n \\
			c+d-a-b,b-n-k
		\end{matrix};1\right].
	\end{equation}
	Inserting \eqref{2F2 exponential expansion} into \eqref{V_2(x,y) two parts}, we can infer that
	\[V_2^{(L)}(x,y)=\frac{\Gamma(c')}{\Gamma(a-b)}y^{a-b-c'}\me^y\sum_{n=0}^{N-1}\frac{\left(b\right)_n}{n!}\left(1-x\right)^{-n}\sum_{k=0}^{N-1}c_{k,n}^* y^{-k}+\mo\left(y^{\Re(a-b-c')-N}\me^{y}\right),\]
	where
	\[c_{k,n}^*=\frac{\left(b-a+c'\right)_k\left(n+b-a+1\right)_k}{k!}\,_3F_2\left[\begin{matrix}
		-k,-n,c+c'-a-1\\
		b-a+c',a-b-n-k
	\end{matrix};1\right].\]
	Hence
	\begin{align}
		V_2^{(L)}(x,y)&=\frac{\Gamma(c')}{\Gamma(a-b)}y^{a-b-c'}\me^y\sum_{k=0}^{N-1}y^{-k}\sum_{j=0}^{k}c_{k-j,j}^*\frac{\left(b\right)_j}{j!}\Big(\frac{y}{1-x}\Big)^j+\mo\left(y^{\Re(a-b-c')-N}\me^{y}\right) \nonumber\\
		&=\frac{\Gamma(c')}{\Gamma(a-b)}y^{a-b-c'}\me^y\sum_{k=0}^{N-1}a_k(x,y)\left(\frac{1-x}{y}\right)^ b y^{-k}+\mo\left(y^{\Re(a-b-c')-N}\me^{y}\right),\label{V_2^(L) estimate}
	\end{align}
	where $a_k(x,y)$ is given by \eqref{coefficients a_k(x,y)}.
	
	For $n\geqslant N$, a combination of \eqref{ratio of Pochhammer symbols} and Theorem \ref{Theorem: 2F2(n;z) bound for z>0} yields
	\begin{align}
		\left|V_2^{(R)}(x,y)\right|&\leqslant Cy^p\me^y\sum_{n=N}^{\infty}\left(n+1\right)^q\left|1-x\right|^{-n}\leqslant Cy^p\me^y\sum_{n=N}^{\infty}\left(n+1\right)^q\gamma_2^n y^{-n} \nonumber\\
		&=C\gamma_2^N y^{p-N}\me^y\sum_{k=0}^{\infty}\left(k+N+1\right)^q\gamma_2^k y^{-k}=\mo\left(y^{p-N}\me^y\right),\label{V_2^(R) estimate}
	\end{align}
	where $p=\Re(a-c-c'+1)+\max\{0,\Re(c-b-1)\}$ and $q=\Re(c)-2+2|b-c+1|$.
	
	For any given positive integer $M$, take $N=M+\lceil p-\Re(a-b-c')\rceil$. Since $p\geqslant \Re(a-b-c')$, one has $N\geqslant M$ and $\Re(a-b-c')-M\geqslant p-N$. Then it follows from \eqref{V_2^(L) estimate} and \eqref{V_2^(R) estimate} that
	\begin{align*}
		\left(1-x\right)^{-b}V_2(x,y)& =\frac{\Gamma(c')}{\Gamma(a-b)}y^{a-2b-c'}\me^y\sum_{k=0}^{N-1}a_k(x,y)y^{-k}+\mo\left(y^{\Re(a-2b-c')-N}\me^y\right)+\mo\left(y^{p-\Re(b)-N}\me^{y}\right)\\
		& =\frac{\Gamma(c')}{\Gamma(a-b)}y^{a-2b-c'}\me^y\sum_{k=0}^{M-1}a_k(x,y)y^{-k}+\mo\left(y^{\Re(a-2b-c')-M}\me^y\right)+\mo\left(y^{p-\Re(b)-N}\me^{y}\right)\\
		& =\frac{\Gamma(c')}{\Gamma(a-b)}y^{a-2b-c'}\me^y\sum_{k=0}^{M-1}a_k(x,y)y^{-k}+\mo\left(y^{\Re(a-2b-c')-M}\me^y\right),
	\end{align*}
	which completes the proof.
\end{proof}

\begin{proof}[\sl\textbf{Proof of Theorem} {\rm\ref{Theorem: Psi_1 asymptotics in the remaining case}}]
	Estimates of error terms here are similar to those of Theorem \ref{Theorem: Psi_1 asymptotics for y positive}, so we omit the details and just extract the expansions.
	
	Let us start with a convergent series
	\[F(x,y):=\sum_{n=0}^{\infty}\frac{\left(a_1\right)_n\left(a_2\right)_n}{\left(b_1\right)_n\left(b_2\right)_n}\,_2F_2\left[\begin{matrix}
		a,b-n \\
		c,d-n
	\end{matrix};y\right]\left(1-x\right)^{-n}.\]
	Recall the asymptotic expansion of ${}_2F_2(y)$ \cite[Equation (5.8)]{Lin_Wong 2018}: for $0\leqslant n\leqslant N$,
	\begin{equation}\label{2F2 complete expansion}
		\begin{split}
			{}_2F_2\left[\begin{matrix}
				a,b-n \\
				c,d-n
			\end{matrix};y\right]={}&\frac{\Gamma(c)\Gamma(d)}{\Gamma(a)\Gamma(b)}\frac{\left(a_3\right)_n}{\left(b_3\right)_n}\bigg\{S_n(-y)+T_n(-y)+\me^y\sum_{k=0}^{N-1}c_{k,n} y^{a+b-c-d-k}\\
			&+\mo\left(\left|y\right|^{-w}\right)+\mo\left(\left|y\right|^{\Re(a+b-c-d)-N}\me^{\Re(y)}\right)\bigg\},
		\end{split}
	\end{equation}
	where $a_3=1-b,\,b_3=1-d$, $S_n(-y)$ and $T_n(-y)$ are defined in Theorem \ref{Theorem: 2F2(-n;-z) expansion}, and the coefficients $c_{k,n}$ are given by \eqref{coefficients c_{n,k}}. For $n\geqslant N+1$, recall the expansion \eqref{2F2 uniform asymptotic expansion for -n}. Using \eqref{ratio of Pochhammer symbols} and the definition of $F(x,y)$  then gives
	\begin{equation}\label{F(x,y) expansion}
		F(x,y)=\frac{\Gamma(c)\Gamma(d)}{\Gamma(a)\Gamma(b)}\left(S_w(x,y)+T_w(x,y)+E_N(x,y)\right)+\mo\left(\left|y\right|^{-w}\right)+\mo\left(\left|y\right|^{\Re(a+b-c-d)-N}\me^{\Re(y)}\right),
	\end{equation}
	where
	\begin{align*}
		S_w(x,y)&:=\sum_{n=0}^{\infty}\frac{\left(a_1\right)_n\left(a_2\right)_n\left(a_3\right)_n}{\left(b_1\right)_n\left(b_2\right)_n\left(b_3\right)_n}S_n(-y)\left(1-x\right)^{-n},\\
		T_w(x,y)&:=\sum_{n=0}^{\infty}\frac{\left(a_1\right)_n\left(a_2\right)_n\left(a_3\right)_n}{\left(b_1\right)_n\left(b_2\right)_n\left(b_3\right)_n}T_n(-y)\left(1-x\right)^{-n},
	\end{align*}
	and
	\[E_N(x,y):=y^{a+b-c-d}\me^y\sum_{k=0}^{N-1}y^{-k}\sum_{j=0}^{k}\frac{\left(a_1\right)_j\left(a_2\right)_j\left(a_3\right)_j}{\left(b_1\right)_j\left(b_2\right)_j\left(b_3\right)_j}c_{k-j,j}\left(\frac{y}{1-x}\right)^j.\]
	
	Direct computation gives
	\begin{align*}
		S_w(x,y)={}&\frac{\Gamma(a)\Gamma(b-a)}{\Gamma(c-a)\Gamma(d-a)}\sum_{k=0}^{\lfloor w-\Re(a)\rfloor}\frac{\left(a\right)_k\left(a-c+1\right)_k\left(a-d+1\right)_k}{\left(1\right)_k\left(a-b+1\right)_k}\\
		&\qquad\cdot\left(-y\right)^{-a-k}\,_5F_4\left[
		\begin{matrix}
			a_1,a_2,a_3,1,a-d+k+1\\
			b_1,b_2,b_3,a-b+k+1
		\end{matrix};\frac{1}{1-x}
		\right]\\
		={}&\frac{\Gamma(a)\Gamma(b-a)}{\Gamma(c-a)\Gamma(d-a)}\sum_{k=0}^{\lfloor w-\Re(a)\rfloor}\frac{\left(a-d+1\right)_k}{\left(a-b+1\right)_k}s_k(x,y)\left(-y\right)^{-a-k}+\mo\left(\left|y\right|^{-\Re(a)-\lfloor w-\Re(a)\rfloor-1}\right),
	\end{align*}
	where
	\[s_k(x,y)=\sum_{j=0}^{k}\frac{\left(a_1\right)_j\left(a_2\right)_j\left(a_3\right)_j}{\left(b_1\right)_j\left(b_2\right)_j\left(b_3\right)_j}\frac{\left(a\right)_{k-j}\left(a-c+1\right)_{k-j}}{\left(1\right)_{k-j}}\left(\frac{y}{x-1}\right)^j.\]
	
	Write $K=\lfloor w-\Re(b)\rfloor$ and $\gamma=\frac{y}{1-x}$. Then
	\begin{align*}
		T_w(x,y)& =\left(-y\right)^{-b}\sum_{n=0}^{\infty}\sum_{k=0}^{n+K}\frac{\left(a_1\right)_n\left(a_2\right)_n\left(a_3\right)_n}{\left(b_1\right)_n\left(b_2\right)_n\left(b_3\right)_n}\frac{\Gamma(b-n+k)\Gamma(a-b+n-k)}{\Gamma(d-b-k)\Gamma(c-b+n-k)}\frac{\left(-1\right)^k}{k!}\left(-y\right)^{n-k}\left(1-x\right)^{-n}\\
		& =\frac{\Gamma(b)\Gamma(a-b)}{\Gamma(c-b)\Gamma(d-b)}\left(-y\right)^{-b}\sum_{n=0}^{\infty}\sum_{k=0}^{n+K}t_{n,k}\gamma^n\left(-y\right)^{-k},
	\end{align*}
	where
	\[t_{n,k}:=\frac{\left(b\right)_k\left(b-c+1\right)_k\left(b-d+1\right)_k}{\left(b-a+1\right)_k k!}\frac{\left(a_1\right)_n\left(a_2\right)_n\left(a_3\right)_n\left(a-b-k\right)_n}{\left(b_1\right)_n\left(b_2\right)_n\left(b_3\right)_n\left(1-b-k\right)_n\left(c-b-k\right)_n}.\]
	Now split $T_w(x,y)$ into two parts:
	\[T_w(x,y)=\frac{\Gamma(b)\Gamma(a-b)}{\Gamma(c-b)\Gamma(d-b)}\left(-y\right)^{-b}\left(T_w^-(x,y)+T_w^+(x,y)\right),\]
	where
	\begin{align*}
		T_w^-(x,y)& =\sum_{n=0}^{\infty}\sum_{k=0}^n t_{n,k}\gamma^n\left(-y\right)^{-k}=\sum_{k=0}^{\infty}\sum_{n=k}^{\infty} t_{n,k}\gamma^n\left(-y\right)^{-k}\\
		& =\sum_{k=0}^K\sum_{n=k}^{\infty}t_{n,k}\gamma^n\left(-y\right)^{-k}+\mo\left(\left|y\right|^{-K-1}\right)
	\end{align*}
	and
	\begin{align*}
		T_w^-(x,y)& =\sum_{n=0}^{\infty}\sum_{k=n+1}^{n+K}t_{n,k}\gamma^n\left(-y\right)^{-k}\\
		& =\sum_{n=0}^K\sum_{k=n+1}^{n+K}t_{n,k}\gamma^n\left(-y\right)^{-k}+\mo\left(\left|y\right|^{-K-1}\right)\\
		& =\sum_{k=1}^K\sum_{n=0}^{k-1}t_{n,k}\gamma^n\left(-y\right)^{-k}+\mo\left(\left|y\right|^{-K-1}\right).
	\end{align*}
	Since the inner sum in the last line is null when $k=0$, we derive
	\begin{align*}
		T_w(x,y)& =\frac{\Gamma(b)\Gamma(a-b)}{\Gamma(c-b)\Gamma(d-b)}\left(-y\right)^{-b}\sum_{k=0}^K\sum_{n=0}^{\infty}t_{n,k}\gamma^n\left(-y\right)^{-k}+\mo\left(\left|y\right|^{-\Re(b)-K-1}\right)\\
		& =\frac{\Gamma(b)\Gamma(a-b)}{\Gamma(c-b)\Gamma(d-b)}\sum_{k=0}^K t_k(x,y)\left(-y\right)^{-b-k}+\mo\left(\left|y\right|^{-\Re(b)-K-1}\right),
	\end{align*}
	where
	\[t_k(x,y)=\frac{\left(b\right)_k\left(b-c+1\right)_k\left(b-d+1\right)_k}{\left(b-a+1\right)_k k!}\,_5F_5\left[\begin{matrix}
		a_1,a_2,a_3,1,a-b-k\\
		b_1,b_2,b_3,1-b-k,c-b-k
	\end{matrix};\frac{y}{1-x}\right].\]
	
	The asymptotic expansion of $\Psi_1$ follows from \eqref{Psi_1 series--|1-x|>1} and the expansion \eqref{F(x,y) expansion}.
\end{proof}

\section{Concluding remarks}\label{Section: 4}
We have established the complete asymptotic expansions of the Humbert function $\Psi_1$ for two large arguments. Our derivation is based on the estimates of ${}_2F_2[a,b\pm n;c,d\pm n;z]$ for large $z$ (see Theorems \ref{Theorem: 2F2(-n;-z) expansion}--\ref{Theorem: 2F2(n;z) bound for z>0}), but the exponential expansions are not included. We conjecture that the more accurate expansion \eqref{2F2 complete expansion} is valid for $n\in\ZZ$ with remainders multiplied by $n^{\lambda}$ for some $\lambda>0$. Braaksma's work \cite{Braaksma 1963} seems helpful to examine this conjecture.

Regarding asymptotics of $\Psi_1$ for large $y$, it was partially studied in \cite[Section 3.1]{Hang_Luo 2025}. The series representations \eqref{Psi_1 series--|1-x|>1} and \cite[Equation (3.4)]{Hang_Luo 2024} appear to provide complete asymptotic expansions for large $y$ with unrestricted $\arg(x)$ and $\arg(y)$. We will determine appropriate methods to verify this. Additionally, the asymptotics of other Humbert functions will be explored in future studies.

Here we would like to make further remarks on our Theorems \ref{Theorem: Psi_1 asymptotics for y positive} and \ref{Theorem: Psi_1 asymptotics in the remaining case}. Note that $\Psi_1$ has a Kummer-type transformation \cite[Equation (2.54)]{Choi_Hasanov 2011}
\begin{equation}\label{Psi_1 Kummer transformation}
	\Psi_1[a,b;c,c';x,y]=\left(1-x\right)^{-a}\Psi_1\left[a,c-b;c,c';\frac{x}{x-1},\frac{y}{1-x}\right],
\end{equation}
which elucidates that the singularity of $\Psi_1[x,y]$ as $x\to 1$ is equivalent to its asymptotics under the condition \eqref{Psi_1 remaining case}. As an example, one may get from \eqref{Psi_1 Kummer transformation} and Theorem \ref{Theorem: Psi_1 asymptotics for y positive} that for fixed $y>0$,
\begin{equation}\label{example}
	\Psi_1\left[1,\frac{1}{2};\frac{3}{2},\frac{1}{2};x,y\right]\sim \frac{\sqrt{\pi}}{2}y^{-\frac{1}{2}}\left(1-x\right)^{\frac{1}{2}}\me^{\frac{y}{1-x}},\quad x\to 1^-,
\end{equation}
which is confirmed by \cite[Equation (3.4)]{Hang_Luo 2024} and was mentioned in \cite[p. 22]{Henkel 2025}.

Finally, by using \textsc{Mathematica} 12, we provide a numerical verification of Theorems \ref{Theorem: Psi_1 asymptotics for y positive} and \ref{Theorem: Psi_1 asymptotics in the remaining case}. The value of $\Psi_1=\Psi_1\bigl[1,\frac{1}{2};\frac{1}{3},\frac{1}{4};tx,ty\bigr]$ for large $t>0$ is evaluated by using \eqref{Psi_1 Laplace integral}, and the corresponding leading term is denoted by $L_{\Psi_1}$. Table \ref{Table 1} illustrates that the ratio $\frac{\Psi_1}{L_{\Psi_1}}$ approaches unity as $t\to +\infty$.
\begin{table}[htbp]
	\centering
	\caption{Numerical verification of $\Psi_1\bigl[1,\frac{1}{2};\frac{1}{3},\frac{1}{4};tx,ty\bigr]$.}\label{Table 1}
	
	\begin{tabular}{ccc|c}
		\hline
		$x$ & $y$ & $t$ & $\frac{\Psi_1}{L_{\Psi_1}}$ \\ \hline
		$-1$ & $2$ & $10$ & $0.971796$ \\
		$-1$ & $2$ & $100$ & $0.997355$ \\
		$-1$ & $2$ & $1000$ & $0.999737$ \\ \hline
		$-1$ & $-2$ & $10$ & $1.045341$ \\
		$-1$ & $-2$ & $100$ & $1.004387$ \\
		$-1$ & $-2$ & $1000$ & $1.000438$ \\
		\hline
	\end{tabular}
\end{table}

\section*{Acknowledgements} We would like to thank Malte Henkel for suggesting the inclusion of the example \eqref{example}, and Nico Temme for bringing Braaksma's work to our attention in a personal discussion with the first author.

\end{document}